\newcommand{\propref}[1]{Proposition~\ref{#1}}
\newcommand{\lemref}[1]{Lemma~\ref{#1}}
\newcommand{\eqnref}[1]{~(\ref{#1})}
\newcommand{\germ}{\mathfrak}
\newtheorem{thm}{Theorem}[section]
\newtheorem{lem}[thm]{Lemma}
\theoremstyle{definition}
\newtheorem{definition}[thm]{Definition}
\newtheorem{prop}[thm]{Proposition}
\subjclass{Primary 17B67, 81R10}
\theoremstyle{rem}
\numberwithin{equation}{section}
\begin{document}
\title{Determination of the 2- cocycles for the three point Witt algebra}

%\date{}                                           % Activate to display a given date or no date
\begin{abstract}  We provide formulas for computing the cocycles on a 3-point Witt algebra $Der(R)$,  using an isomorphism between two 3-point algebras $Der(R)$ and $Der ( S)$, where the cocycle is already defined.  These cocycles can  be used to construct universal central extensions and the 3-point Virasoro, which are useful for the representation theory of a 3-point current algebra. The computations determining the cocycles on $Der(R)$ involve elegant applications of the Chu-Vandermonde convolution and other identities for sums of  binomial coefficients. 
\end{abstract}
\keywords{Virasoro algebra,  Three Point Algebras, Affine Lie 
Algebras}
\author{Elizabeth Jurisich}
\address{Department of Mathematics,
The College of Charleston,
Charleston SC 29424}
\email{jurisiche@cofc.edu}
\author{Renato .A. Martins}
\address{Institute of Science and Tecnology,
Federal University of Sao Paulo,
Sao Jose dos Campos SP 12247014, Brazil}
\email{martins.renato@unifesp.br}

%\section{}
%\subsection{}
\maketitle

\section{introduction}
 
In this paper, we consider two Lie algebras of derivations of rings,  $Der (  R)$ and  $Der (  S)$, where $  S=\mathbb C[s,s^{-1},(s-1)^{-1} ]$ and $
   R=\mathbb C[t,t^{-1},u\,|\, u^2=t^2+4t]$. Note that $  R$ and $  S$ can be viewed as rings of functions on the Riemann sphere, these algebras are a type of Krichever-Novikov algebras, see \cite{MR2058804}  and \cite{MR1039524} for example. The algebras are also known as 3-point algebras, in particular 3-point Witt algebras.  These algebras are interesting examples of generalizations of the loop algebra construction of affine Lie algebras, which have an interesting representation theory and relations to mathematical physics \cite{MR2152962}, and provide an example of a family of simple Lie algebras that are infinite-dimensional.  The central extensions two algebras  under consideration in this paper play a role similar to that of the Virasoro algebra in the representation theory of current algebras for 3-point current algebras \cite{CJM}, so construction of cocycles on our 3-point Witt algebras play an important role. The result in this paper solves a problem posed in \cite{MR1109216}, one of the fundamental papers on the 3-point Witt algebra.

  We give explicit basis, and computations to construct explicit formulas for the cocycles of $Der(  R)$ by making use of an isomorphism given in \cite{MR3245847}. This isomorphism is used in \cite{CJM} to establish that the cocycles given are not co-boundaries, and correspond to the cocycles in \cite{MR3211093}. The verification of the invariance of the cocycles under the isomorphism involves extensive and interesting use of some well known combinatorial identities of products and sums of binomial coefficients.

 \section{The 3-point Witt algebra and 2-cocycles}

In this paper, all vector spaces and algebras will be over the complex numbers $\mathbb C$. 
Given a ring $R$, let $Der (R)$ denote the Lie algebra of derivations of the ring.  

For any Lie algebra $\mathfrak l$ a 2-cocycle is defined to be a bilinear function $\varphi : \mathfrak l \times \mathfrak l \rightarrow \mathbb C$ satisfying the following conditions:
\begin{enumerate}
\item $ \varphi (u,v) = - \varphi(v,u)$\\
\item $\varphi([ u,v],w) + \varphi([v,w],u) +\varphi([ w,u],v) = 0$
\end{enumerate}
for all $u,v,w \in \mathfrak l$. 

Recall, that the space of 2-cocycles determines the central extensions of the Lie algebra $\mathfrak l$ (see for example \cite{MR1269324}). The universal central extension of the 3-point Witt algebra is used in \cite{CJM} to construct representations of a 3-point current algebra, motivating us to find an explicit formula for the cocycles on the algebra.
 
 The ring of rational functions on the Riemann sphere with poles at $s= 0, 1,$ and $\infty$ is $  S:= \mathbb C[s,s^{-1},(s-1)^{-1} ]$,  the Lie algebra $Der (  S)$ is studied extensively in \cite{MR3211093}, and is one version of the 3-point Witt algebra. 

Now define
$\mathcal R := \mathbb C[t,t^{-1},u]$, and let $ a $ denote the ideal generated by $u^2-(t^2+4t)$ ( see \cite{CJM} for a more general setting), we set $  R := \mathcal R / a$.  Central extensions of this version of the 3-point Witt algebra is used in \cite{CJM} to construct free field representations of a 3-point current algebra, making an explicit formula for computing the values of the cocycles necessary. 

%say more about cocycles and central extensions here. References???

The element $D = (t + 2) \frac{ \partial }{\partial u} + u  \frac{ \partial }{\partial t} $ of $Der(  R)$ will also be used to denote the corresponding derivation on $  R$, which is well defined as $D \cdot a \subset a$. The following lemma is a special case of a result in \cite{MR3245847}. 

\begin{lem}  Let  $  R := R/ a=\mathbb C[t,t^{-1},u|u^2=t^2+ 4t]$  then
$$
\text{Der}(  R)=  R\left(D \right).
$$  
\end{lem}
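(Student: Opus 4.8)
The plan is to prove the two inclusions $R(D) \subseteq \text{Der}(R)$ and $\text{Der}(R) \subseteq R(D)$ separately, the first being routine and the second carrying all the content. For the easy inclusion, observe that for any $f \in R$ the map $fD : x \mapsto f\,D(x)$ satisfies the Leibniz rule because $D$ does and $R$ is commutative, so $fD \in \text{Der}(R)$; hence $R(D) \subseteq \text{Der}(R)$. Throughout I would record two structural facts about $R$: first, that $R$ is free of rank two as a module over $\mathbb C[t,t^{-1}]$ with basis $\{1,u\}$, so every element is written uniquely as $p(t) + q(t)\,u$; and second, that $R$ is an integral domain, since $u^2 - (t^2+4t) = u^2 - t(t+4)$ is irreducible over $\mathbb C[t,t^{-1}]$ (the polynomial $t(t+4)$ has distinct roots and so is not a square).

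For the main inclusion, let $\delta \in \text{Der}(R)$. Since $R$ is generated as a ring by $t$, $t^{-1}$, and $u$, and since $\delta(t^{-1}) = -t^{-2}\delta(t)$ is forced, $\delta$ is completely determined by the pair $(\delta(t), \delta(u))$. Applying $\delta$ to the defining relation $u^2 = t^2 + 4t$ gives $2u\,\delta(u) = (2t+4)\,\delta(t)$, i.e.\ the constraint
\[
u\,\delta(u) = (t+2)\,\delta(t).
\]
Since $D(t) = u$ and $D(u) = t+2$, a candidate $f \in R$ realizing $\delta = fD$ must satisfy $fu = \delta(t)$ and $f(t+2) = \delta(u)$; the constraint above is exactly what makes these two requirements compatible in the domain $R$ (multiply the first by $t+2$, substitute the constraint, and cancel the nonzero factor $u$, so that the second requirement follows automatically from the first).

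It remains to produce such an $f$ inside $R$, and this is the step I expect to be the crux. Writing $\delta(t) = p_1 + q_1 u$ and $\delta(u) = p_2 + q_2 u$ in the basis $\{1,u\}$, and expanding the constraint using $u^2 = t(t+4)$, I would read off the coefficient equations $q_2\,t(t+4) = (t+2)\,p_1$ and $p_2 = (t+2)\,q_1$. Seeking $f = r + su$, the equation $fu = \delta(t)$ becomes $s\,t(t+4) = p_1$ together with $r = q_1$, so everything hinges on showing $t(t+4) \mid p_1$ in $\mathbb C[t,t^{-1}]$. This is where a divisibility argument in the principal ideal domain $\mathbb C[t,t^{-1}]$ enters: from $(t+2)\,p_1 = q_2\, t(t+4)$ and the fact that $t+2$ is coprime to both $t$ (a unit here) and $t+4$, one concludes $(t+2) \mid q_2$, whence $t(t+4) \mid p_1$. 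Setting $s = p_1/(t(t+4))$ and $r = q_1$ then yields $f = r + su \in R$ with $fu = \delta(t)$, and the compatibility established above gives $f(t+2) = \delta(u)$, so $\delta = fD$. Finally, uniqueness of $f$ — and hence freeness of $\text{Der}(R)$ as a rank-one $R$-module — follows from $R$ being a domain together with $u \neq 0$.
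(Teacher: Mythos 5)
Your proof is correct, but there is nothing in the paper to compare it against line by line: the paper does not prove this lemma at all, stating only that it is a special case of a result in \cite{MR3245847}. Your argument is a complete, self-contained substitute for that citation, and it is sound at every step. The two structural facts you isolate (that $R$ is free over $\mathbb{C}[t,t^{-1}]$ with basis $\{1,u\}$, since $u^2-(t^2+4t)$ is monic in $u$, and that $R$ is a domain, since $t(t+4)$ is not a square) are exactly what is needed; differentiating the relation $u^2=t^2+4t$ yields the constraint $u\,\delta(u)=(t+2)\,\delta(t)$; the coefficient equations $q_2\,t(t+4)=(t+2)p_1$ and $p_2=(t+2)q_1$ follow from freeness; and the divisibility step in the UFD $\mathbb{C}[t,t^{-1}]$ (coprimality of $t+2$ with $t+4$, with $t$ a unit, forces $(t+2)\mid q_2$ and hence $t(t+4)\mid p_1$) produces $f=q_1+su\in R$ with $fu=\delta(t)$, after which the domain property and $u\neq 0$ upgrade this to $f(t+2)=\delta(u)$, so $\delta=fD$ on the algebra generators $t,t^{-1},u$ and hence on all of $R$. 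The cited result in \cite{MR3245847} is proved in greater generality (rings $\mathbb{C}[t,t^{-1},u]$ with $u^2=p(t)$ for more general $p$); your argument trades that generality for an elementary, explicit verification in the one case this paper uses, which is arguably a service to the reader. One small point worth tightening: the claim that $t(t+4)$ is not a square is best phrased via unique factorization (the prime $t+4$ occurs with odd multiplicity, and multiplicities in a square are even), since over the Laurent ring the factor $t$ is a unit and the ``distinct roots'' heuristic needs that extra word of justification.
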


\begin{definition}
We call $ Der (  R)$ the  3-point Witt algebra.
\end{definition}

We work in the following basis of  ${Der}( R)$:
\begin{equation}
d_n:=t^nuD,\quad   e_n=t^nD\quad \text{for} \quad D= (t+2)\frac{\partial}{\partial u}+u\frac{\partial}{\partial t} \label{eq:basis}
\end{equation}

The commutation relations of the basis elements are:
\begin{align*}
[  d_m,  d_n]
&=(n-m)\left(  d_{m+n+1}+4  d_{m+n}\right) \\
[  e_m,  e_n]&=(n-m)  d_{m+n-1} \\
[  d_m,  e_n]&=(n-m-1)  e_{m+n+1}+(4n-4m-2)  e_{m+n}.
\end{align*}

These relations can be verified by direct computation.

Define $f:R\to S$ and $\phi:S\to R$ by 
\begin{equation}
f(t)=s^{-1}(s-1)^2,\quad f(u)=s-s^{-1},\quad \phi(s)=\frac{t+2+u}{2},\quad \phi(s^{-1})=\frac{t+2-u}{2}\label{isom1}
\end{equation}
so that 
$$
\phi((s-1)^{-1})=\frac{t^{-1}u-1}{2}.
$$

It is straightforward to show that  $f:R \to S$ is an isomorphism of rings,  in fact the following is proven in \cite{MR3245847}:

\begin{lem}  
The Lie algebras  $Der(  R)$ and $Der(  S)$  are isomorphic via $t\mapsto s^{-1}(s-1)^2$, and $u\mapsto s-s^{-1}$.
%\item The rings $  R$ and $  A$ are isomorphic.
\end{lem}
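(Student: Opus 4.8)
The plan is to obtain the Lie algebra isomorphism by \emph{transport of structure}: any isomorphism of commutative $\mathbb{C}$-algebras induces an isomorphism of the associated derivation Lie algebras by conjugation, and I would simply apply this to the ring isomorphism $f$ of \eqnref{isom1}. Before doing so, I would confirm that $f$ really is well defined on $R$, i.e. that it respects the defining relation $u^2 = t^2 + 4t$. Writing $f(t) = s^{-1}(s-1)^2 = s - 2 + s^{-1}$, one computes $f(u)^2 = (s - s^{-1})^2 = s^2 - 2 + s^{-2}$ and $f(t)^2 + 4f(t) = s^2 - 2 + s^{-2}$, so the relation is preserved; together with the stated inverse $\phi$ of \eqnref{isom1} (for which $f\circ\phi = \mathrm{id}_S$ and $\phi\circ f = \mathrm{id}_R$ are checked on generators) this gives that $f$ is a ring isomorphism.

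Next I would define $\Psi \colon Der(R) \to Der(S)$ by $\Psi(\delta) = f \circ \delta \circ \phi$ and verify the four required properties. First, $\Psi(\delta)$ is a derivation of $S$: for $a,b \in S$, expanding $\Psi(\delta)(ab) = f(\delta(\phi(a)\phi(b)))$ and using the Leibniz rule for $\delta$ together with the fact that $f$ and $\phi$ are ring homomorphisms and $f\circ\phi = \mathrm{id}_S$ yields $\Psi(\delta)(a)\,b + a\,\Psi(\delta)(b)$. Second, $\Psi$ is $\mathbb{C}$-linear, which is immediate. Third, $\Psi$ preserves the bracket: expanding $[\Psi(\delta_1),\Psi(\delta_2)] = (f\delta_1\phi)(f\delta_2\phi) - (f\delta_2\phi)(f\delta_1\phi)$ and inserting the cancellation $\phi\circ f = \mathrm{id}_R$ between the inner factors collapses each term to $f\delta_1\delta_2\phi$ and $f\delta_2\delta_1\phi$, giving $f\circ[\delta_1,\delta_2]\circ\phi = \Psi([\delta_1,\delta_2])$. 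Fourth, $\Psi$ is bijective, with inverse $\epsilon \mapsto \phi\circ\epsilon\circ f$, again by the two cancellation identities.

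The computation is entirely routine, and there is no deep obstacle: the single fact doing all the work is the pair of cancellations $\phi\circ f = \mathrm{id}_R$ and $f\circ\phi = \mathrm{id}_S$, which simultaneously forces the Leibniz rule and the bracket-preservation; the only thing to watch is the bookkeeping of which way each composite points. As a byproduct, since the previous lemma gives $Der(R) = R\cdot D$, applying $\Psi$ shows $Der(S) = \Psi(R\cdot D) = S\cdot\Psi(D)$ is free of rank one over $S$ on the generator $\Psi(D) = f\circ D\circ\phi$, which one can then express explicitly in terms of the generator $\partial/\partial s$ of $Der(S)$ should an explicit generator be wanted for the later cocycle transport.
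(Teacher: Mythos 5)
Your proposal is correct and is essentially the paper's approach: the paper offers no proof of this lemma (it is quoted from \cite{MR3245847}), but the map it defines immediately afterwards, $\Phi_f(u)=fuf^{-1}$, is precisely your conjugation $\Psi$, and your checks (that $f$ preserves the relation $u^2=t^2+4t$, that $f\circ\phi$ and $\phi\circ f$ are identities on generators, and that conjugation by a ring isomorphism transports derivations and brackets) constitute the standard argument behind that citation. One small point to make explicit: $\phi$ must be shown to be well defined on $S=\mathbb{C}[s,s^{-1},(s-1)^{-1}]$, i.e.\ $\phi(s)\phi(s^{-1})=\frac{(t+2)^2-u^2}{4}=1$ and $\phi(s-1)\cdot\frac{t^{-1}u-1}{2}=1$ in $R$, a routine check of the same kind as your verification that $f$ respects the defining relation.
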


\subsection{Construction of the cocycles}

Recall as above  $S = \mathbb C[s,s^{-1},(s-1)^{-1}  ]$,
and set $a_1=0$ and $a_2=1$.  For $i=1,2$ the following maps are defined in  \cite{MR3211093}  
$$
\phi_i:\text{Der}(S)\times \text{Der}(S)\to \mathbb C
$$
 by specifying that $\phi_i$ is skew symmetric, has the following values on the basis $\{ s^k, (s-1)^{-l} | k \in \mathbb Z , l \in \mathbb N \}$ of $S = \mathbb C [ s, s^{-1}, (s-1)^{-1}]$, and is extended linearly: 

\begin{equation}\label{4.1}
\phi_i(s^{k+1}\partial,
(s-a_i)^{-l+1}\partial)=\begin{cases} {k+1\choose
l+1} (l^3-l),\,\,\forall\, k,l\in\mathbb Z_+,\hspace{.5cm}&\text{ if }a_i\neq 0,\\
\delta_{k,l}(l^3-l),\,\,\forall\, k,l\in\mathbb Z &\text{ if }a_i= 0, \\
\end{cases}
\end{equation}
 \begin{align*}
\phi_i((s-a_i)^{-k}\partial, (s-a_j)^{-l}\partial) 
&=\frac{(k+l+1)!}{(k-1)!(l-1)!(a_j-a_i)^{k+1}(a_i-a_j)^{l+1}},\\ 
&=\frac{(-1)^{l+1}}{(a_j-a_i)^{k+l+2}}\binom{k+l-2}{k-1}((k+l)^3-(k+l)),\\ 
&\hskip 100pt\forall \, 
k,l\in\mathbb N, j\neq i, 
\end{align*}
and $\phi_i=0$ at all other pairs of
basis elements. From this it is easy to see that
\begin{align}\label{virasoroconstraint}
\phi_i((s-a_i)^{k+1}\partial,
(s-a_i)^{-l+1}\partial)=\delta_{k,l}(l^3-l), \,\,\forall\,
k,l\in\mathbb Z.
\end{align}
which is the motivation for the formula \eqnref{4.1}.

The following result appears in \cite{MR3211093}:

\begin{prop} [Cox, Guo, Lu, and Zhao] The above defined $\phi_i$ are $2$-cocycles on $\text{Der}(S)$ which are not $2$-coboundaries for all $i=1,2$.
\end{prop}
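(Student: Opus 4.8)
My plan is to establish the two assertions separately: that each $\phi_i$ satisfies the cocycle identity, and that it is not a coboundary. The most efficient route to the cocycle property is to recognize the tabulated values as those of a single residue functional. Writing a derivation of $S$ as $f\partial$ with $f\in S$ and $\partial=\partial/\partial s$, I claim that
$$\phi_i(f\partial,g\partial)=\mathrm{Res}_{s=a_i}\bigl(f'''g\,ds\bigr).$$
The first step is to check that this formula reproduces \eqnref{4.1} on all pairs of basis elements. For $f=s^{k+1}$, $g=(s-a_i)^{-l+1}$ one expands $f'''=(k+1)k(k-1)s^{k-2}$ about $s=a_i$ and extracts the coefficient of $(s-a_i)^{-1}$; the resulting factorial rearrangement is exactly the identity $(k+1)k(k-1)\binom{k-2}{l-2}=\binom{k+1}{l+1}(l^3-l)$, and the $a_i=0$ case collapses to the Kronecker-delta form. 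The cross terms $((s-a_i)^{-k}\partial,(s-a_j)^{-l}\partial)$ are handled the same way, expanding $(s-a_j)^{-l}$ about $s=a_i$ by the binomial series; this is where a Chu--Vandermonde style identity is genuinely needed to match the stated closed form, and where the factor $(a_i-a_j)^{-(k+l+2)}$ and the sign $(-1)^{l+1}$ emerge. Skew-symmetry is then automatic, since integrating by parts three times gives $\mathrm{Res}(f'''g\,ds)=-\mathrm{Res}(fg'''\,ds)$.

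Granting the residue description, the cocycle identity becomes a formal consequence of two facts: the bracket on $\mathrm{Der}(S)$ is $[f\partial,g\partial]=(fg'-f'g)\partial$, and the residue of an exact differential vanishes, $\mathrm{Res}_{s=a_i}(h'\,ds)=0$ for every $h\in S$. Setting $W=fg'-f'g$ and expanding $W'''$, the cyclic sum $\sum_{\mathrm{cyc}}\phi_i([f\partial,g\partial],h\partial)=\sum_{\mathrm{cyc}}\mathrm{Res}_{s=a_i}(W'''h\,ds)$ collects into the residue of a total derivative of a symmetric expression in $f,g,h$ and their derivatives, hence is zero. I would carry this out by repeatedly moving derivatives off $h$ onto the $fg'-f'g$ factor via $\mathrm{Res}(p'q\,ds)=-\mathrm{Res}(pq'\,ds)$, then observing the cancellation dictated by the cyclic symmetry. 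The only subtlety is that we take the residue at a \emph{single} point rather than summing over all poles; this is harmless precisely because $\mathrm{Res}_{a_i}(h'\,ds)=0$ holds pointwise.

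For the coboundary statement I would restrict to the Witt subalgebra $\mathfrak{W}_i=\mathrm{span}\{L_n:=(s-a_i)^{n+1}\partial\mid n\in\mathbb Z\}\subset\mathrm{Der}(S)$, on which $[L_m,L_n]=(n-m)L_{m+n}$. Equation \eqnref{virasoroconstraint} says precisely that $\phi_i(L_m,L_{-m})=m^3-m$ and that $\phi_i$ vanishes on $\mathfrak{W}_i$ off the antidiagonal $m+n=0$. Since the restriction of any coboundary is again a coboundary, it suffices to see that $\phi_i|_{\mathfrak{W}_i}$ is not one. If it were, say $\phi_i(L_m,L_n)=\psi([L_m,L_n])=(n-m)\psi(L_{m+n})$ for a linear functional $\psi$, then taking $n=-m$ would force $m^3-m=-2m\,\psi(L_0)$ for all $m\in\mathbb Z$; the left side is cubic and the right side linear in $m$, which is impossible (e.g. $m=1$ gives $\psi(L_0)=0$ while $m=2$ gives $\psi(L_0)=-3/2$). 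Hence $\phi_i$ is not a coboundary on $\mathrm{Der}(S)$.

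The main obstacle is the first step: verifying that the single residue functional reproduces every entry of \eqnref{4.1}, in particular the mixed pole terms, which is exactly where the binomial-coefficient identities advertised in the abstract do the work. Once the residue description is in hand, both the cocycle identity (residue of an exact form) and the non-triviality (cubic versus linear) are clean and essentially formal. An alternative, more laborious route avoids residues entirely and verifies the Jacobi-type identity \emph{directly} on basis triples, splitting into the cases where zero, one, or two of the arguments are polar at $a_i$; this is self-contained but requires the same combinatorial identities to force the three-term cyclic sums to cancel in each case.
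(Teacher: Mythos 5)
Your proposal is correct, but note that there is no internal proof to compare it against: the paper quotes this proposition from \cite{MR3211093} and its own work begins only afterwards, with the transport of the $\phi_i$ through $\Phi_f$. What you have produced is in effect a reconstruction of the argument behind the cited reference, where the $\phi_i$ arise exactly as local residue cocycles. Your key claim $\phi_i(f\partial,g\partial)=\mathrm{Res}_{s=a_i}(f'''g\,ds)$ does reproduce \eqnref{4.1}: for $f=s^{k+1}$, $g=(s-1)^{-l+1}$ the extracted coefficient is $(k+1)k(k-1)\binom{k-2}{l-2}=\binom{k+1}{l+1}(l^3-l)$, and for the mixed poles one gets $(-1)^{k+1}\frac{(k+l+1)!}{(k-1)!\,(l-1)!}(a_i-a_j)^{-k-l-2}$, which is the stated value; here, contrary to your remark, no Chu--Vandermonde convolution is actually needed, only a single coefficient extraction from the binomial series. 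One easy check you omit should be recorded: the residue functional also vanishes on ``all other pairs'' of basis elements (in those cases $f'''g$ is regular at $a_i$, or the factor $(k+1)k(k-1)$ vanishes), so the two skew-symmetric bilinear forms agree on all ordered basis pairs and hence everywhere. Granting this, your two remaining steps are sound: the cyclic identity reduces, after one integration by parts per term using $\mathrm{Res}_{a_i}(h'\,ds)=0$, to the six second-derivative terms $f''(gh''-g''h)+g''(hf''-h''f)+h''(fg''-f''g)$, which cancel in pairs; and the non-coboundary claim follows by restricting to the Witt subalgebra spanned by the $(s-a_i)^{n+1}\partial$ and invoking \eqnref{virasoroconstraint}, since $m^3-m=-2m\,\psi(L_0)$ fails at $m=2$ once $m=1$ forces $\psi(L_0)=0$. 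Compared with the paper, which obtains the proposition by citation and then carries out heavy case-by-case binomial computations for the transported cocycles $\bar\phi_i$, your route is self-contained and conceptual: skew-symmetry and the cocycle identity become formal consequences of the vanishing of residues of exact differentials, at the cost of having to verify the residue description against the table of values in \eqnref{4.1}.
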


We now use the isomorphisms \eqnref{isom1} to define an isomorphism of $\text{Der}(R)$ and $\text{Der}(S)$ which we will then use in conjunction with the above proposition to define 2-cocycles on $\text{Der}(R)$.  First we define $\Phi_f:\text{Der}(R)\to \text{Der}(S)$ by the following 
\begin{equation}
\Phi_f(u)=fuf^{-1}.
\end{equation}
Now define 2-cocycles $\bar\phi_i$, $i=1,2$ on $\text{Der}(R)$ by 
\begin{equation}\label{eq:defn}
\bar\phi_i(u,v):=\phi_i(\Phi_f(u),\Phi_f(v)).
\end{equation}

Note that \eqref{eq:defn} determines the cocycles on our algebra $Der(R)$, but we wish to obtain a formula for computing the values of the cocycles directly. This direct formula will be used in \cite{CJM} to construct central extensions of the 3-point Witt algebra and representations of the 3-point current algebra. 

\begin{lem} For $k\in\mathbb Z$, one has
\begin{align*}
\Phi_f(t^kD)=\begin{cases}
  \sum_{a=0}^{2k}\binom{2k}{a}(-1)^{a}s^{-k+a+1}\partial_s&\quad \text{if }k\geq 0 \\ 
 \left(1+\frac{2 }{s-1}+\frac{1}{(s-1)^2}\right)\partial_s&\text{ if }k=-1. \\ 
 \sum_{b=0}^{-k+1}\binom{-k+1}{b}(s-1)^{k-b+1}\partial_s &\quad \text{if }k<-1.
\end{cases}
\end{align*}
and 
\begin{align*}
\Phi_f(t^kuD)=\begin{cases} 
 \sum_{a=0}^{2k+1}\binom{2k+1}{a}(-1)^{a}s^{k-a+2}\partial_s + \sum_{b = 0}^{2k+1} \binom{2k+1}{b} (-1)^b s^{k+1-b}\partial_s &\quad \text{if }k\geq 0 \\ 
 \sum_{a=0}^{-k+1}\binom{-k+1}{a}(s-1)^{a + 2k + 1} \partial_s +\sum_{b = 0}^{-k} \binom{-k}{b} (s-1)^{2k +b +1}\partial_s
   &\quad \text{if }k<-1\\
     \left(s + 2  + \frac{2}{(s-1)}  \right)\partial_s     &\quad \text{if }k = -1.
\end{cases}
\end{align*}
\end{lem}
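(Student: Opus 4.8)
The plan is to reduce the entire statement to a single computation of $\Phi_f(D)$, combined with the multiplicativity of the pushforward of derivations. Recall that $\Phi_f$ sends a derivation $\delta$ of $R$ to $f\circ\delta\circ f^{-1}$, a derivation of $S$, and that every derivation of $S$ has the form $g\,\partial_s$ for a unique $g\in S$; hence a derivation of $S$ is pinned down by its value on $s$. So first I would compute $\Phi_f(D)$ by evaluating it on $s$. Writing $\phi=f^{-1}$, one has $\phi(s)=\tfrac12(t+2+u)$, and since $D(t)=u$ and $D(u)=t+2$ it follows that $D(\phi(s))=\tfrac12(u+t+2)=\phi(s)$. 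Applying $f$ gives $\Phi_f(D)(s)=f(\phi(s))=s$, so that
\[
\Phi_f(D)=s\,\partial_s .
\]

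The second step is to note that for any $r\in R$ and any $g\in S$,
\[
\Phi_f(rD)(g)=f\bigl(r\,D(f^{-1}(g))\bigr)=f(r)\,\Phi_f(D)(g),
\]
whence $\Phi_f(rD)=f(r)\,s\,\partial_s$. Taking $r=t^k$ and $r=t^k u$ and substituting $f(t)=s^{-1}(s-1)^2$ and $f(u)=s-s^{-1}$ yields the two closed forms
\[
\Phi_f(t^kD)=s^{\,1-k}(s-1)^{2k}\,\partial_s,\qquad
\Phi_f(t^kuD)=s^{-k}(s-1)^{2k+1}(s+1)\,\partial_s,
\]
the latter after simplifying $(s-s^{-1})\,s=s^2-1=(s-1)(s+1)$. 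Everything asserted in the statement is now a matter of expanding these two rational expressions in a form adapted to the basis $\{s^k,(s-1)^{-l}\}$.

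Third, I would carry out the binomial expansions, the guiding principle being to choose the expansion variable according to where the poles sit. For $k\geq 0$ both expressions are Laurent polynomials in $s$, so I expand the factor $(s-1)^n$ (with $n=2k$, resp.\ $n=2k+1$) via $(s-1)^n=\sum_{a}\binom{n}{a}(-1)^{a}s^{\,n-a}$ and, in the second case, distribute the remaining factor $s+1$; collecting powers of $s$, reindexing, and using $(-1)^{2k-a}=(-1)^a$ reproduces the two stated sums. For $k<-1$ the poles sit at $s=1$, so instead I write $s=(s-1)+1$ and expand the positive powers $s^{\,1-k}$ (resp.\ $s^{-k+1}$ and $s^{-k}$, after splitting $s^{-k}(s+1)=s^{-k+1}+s^{-k}$) by the binomial theorem in powers of $(s-1)$, then multiply through by $(s-1)^{2k}$ (resp.\ $(s-1)^{2k+1}$); here the reindexing $b\mapsto (1-k)-b$ brings the first formula into the stated shape, while the $t^kuD$ sums fall out with no reindexing at all. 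The two exceptional cases $k=-1$ are handled directly by the divisions $\tfrac{s^2}{(s-1)^2}=1+\tfrac{2}{s-1}+\tfrac{1}{(s-1)^2}$ and $\tfrac{s^2+s}{s-1}=s+2+\tfrac{2}{s-1}$.

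The conceptual content lies entirely in the first two steps, which are short; the genuine obstacle is purely bookkeeping. One must keep straight, in each range of $k$, which variable to expand around, track the parity-dependent signs arising from $(s-1)^n$ with $n$ even versus odd, and perform the reindexings that bring the raw binomial sums into precisely the exponent ranges and index limits asserted. These manipulations are routine but error-prone, and matching the claimed closed forms exactly — limits, signs, and the shift between powers of $s$ and powers of $(s-1)$ — is where all the care is required.
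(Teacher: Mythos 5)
Your proposal is correct and follows essentially the same route as the paper: both reduce to computing the image of $s$ under $f\circ t^kD\circ f^{-1}$ (the paper does this in one string of equalities, you factor it as $\Phi_f(D)=s\,\partial_s$ together with $\Phi_f(rD)=f(r)\,\Phi_f(D)$, which is the same use of multiplicativity of $f$), arriving at the closed forms $s^{1-k}(s-1)^{2k}\partial_s$ and $s^{-k}(s+1)(s-1)^{2k+1}\partial_s$, and then perform the identical case-by-case binomial expansions around $s$ or $s-1$.
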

\begin{proof}
For $k\geq 0$ we have
\begin{align*}
\Phi_f(t^kD)(s)&=ft^kDf^{-1}(s)=f\left(t^kD\left(\frac{t+2+u}{2}\right)\right)=f\left(t^k\left(\frac{t+2+u}{2}\right)\right) \\
&=f\left(t\right)^kf\left(\frac{t+2+u}{2}\right)=s^{1-k}(s-1)^{2k} \\
&=\begin{cases}
\sum_{a=0}^{2k}\binom{2k}{a}(-1)^as^{k-a+1}& \text{ if } k\geq 0 \\
1+\frac{2}{s-1}+\frac{1}{(s-1)^2}&\text{ if }k=-1. \\
\sum_{b=0}^{-k+1}\binom{-k+1}{b}(s-1)^{k+1-b}&\text{ if }k<-1.
\end{cases}  \\
&=\begin{cases}
\sum_{a=0}^{2k}\binom{2k}{a}(-1)^as^{k-a+1}\partial_s(s)& \text{ if } k\geq 0 \\
\left(\partial_s+\frac{2\partial_s}{s-1}+\frac{1\partial_s}{(s-1)^2}\right)(s)&\text{ if }k=-1. \\
\sum_{b=0}^{-k+1}\binom{-k+1}{b}(s-1)^{k+1-b}\partial_s(s)&\text{ if }k<0.
\end{cases}
\end{align*}
\begin{align*}  
\Phi_f(t^kuD)(s)&=ft^kuDf^{-1}(s)=f\left(t^kuD\left(\frac{t+2+u}{2}\right)\right)=f\left(t^ku\left(\frac{t+2+u}{2}\right)\right) \\
&=f(t^k)f(u)f\left(\frac{t+2+u}{2}\right) =s^{1-k}(s-s^{-1})(s-1)^{2k} \\
&=s^{-k}(s^2-1)(s-1)^{2k}=s^{-k}(s+1)(s-1)^{2k+1} \\
&=s^{-k+1}(s-1)^{2k+1} +s^{-k}(s-1)^{2k+1} \\
&=\begin{cases}
  \sum_{a=0}^{2k+1}\binom{2k+1}{a}(-1)^{a}s^{k-a+2} + \sum_{b = 0}^{2k+1} \binom{2k+1}{b} (-1)^b s^{k+1-b} &\quad \text{if }2k+1\geq 0 \\ 
 \sum_{a=0}^{-k+1}\binom{-k+1}{a}(s-1)^{a + 2k + 1}  +\sum_{b = 0}^{-k} \binom{-k}{b} (s-1)^{2k +b +1}  &\quad \text{if }2k+1<0.
\end{cases}  \\
&=\begin{cases}
  \sum_{a=0}^{2k+1}\binom{2k+1}{a}(-1)^{a}s^{k-a+2} + \sum_{b = 0}^{2k+1} \binom{2k+1}{b} (-1)^b s^{k+1-b} &\quad \text{if }2k+1\geq 0 \\ 
    \left(s + 2  + \frac{2}{(s-1)}  \right)\partial_s     &\quad \text{if }k = -1\\
    \left(\frac{4}{(s-1)}  + \frac{5}{(s-1)^2} + \frac{2}{(s-1)^3 }+1  \right)\partial_s     &\quad \text{if }k = -2 \\
 \sum_{a=0}^{-k+1}\binom{-k+1}{a}(s-1)^{a + 2k + 1}  +\sum_{b = 0}^{-k} \binom{-k}{b} (s-1)^{2k +b +1}  &\quad \text{if }2k+1<-3  
\end{cases}
%=s^{-k+1}(s-1)^{2k+1}+s^{-k}(s-1)^{2k+1}
\end{align*}
\end{proof}

\begin{thm} \label{main} The  $\mathbb C$ valued bilinear functions $ \bar\phi_i(u,v)$  defined by \eqref{eq:defn} on $Der (R ) \times Der(R) $ are cocycles that are not co-boundaries. On the basis elements, the cocycles have the values:

 For all $k,l\in\mathbb Z$ 
\begin{align*}
\bar\phi_1(e_k, e_l) 
& =2 (l^2-l) (2 l-1)\delta_{k+l,1} +(l^3-l)\delta_{k+l,0}    \\
\bar\phi_1(e_k,d_l)=  & =6(-1)^{k+l}2^{k+l}(k-1)kl  \dfrac{(2k+2l-3)!!}{ (k+l+1)!}.\\
\bar\phi_1(d_l,e_k)&  =- \bar\phi_1(e_k,d_l)\\
\bar\phi_1(d_k,d_l)  &=l (l+1) (l+2) \delta _{k+l,-2}+4 l (2 l+1) ( l+1) \delta _{k+l,-1}
+4 l (2 l-1) (2 l+1) \delta _{k+l,0}.
\end{align*}
where, by definition, $(2k+2l-3)!!=(2k+2l-3)\cdot(2k+2l-5)\cdot ... \cdot5\cdot3\cdot1$.

For all $k,l\in\mathbb Z$

\begin{align*}
\bar\phi_2 (e_k,e_l) &=-2(l^3-l)\delta_{l+k,0}-4l(l-1)(2l-1)\delta_{k+l,1}\\
\bar\phi_2 (e_k,d_l) &= -\bar\phi_2 (d_l,e_k) = 0\\
 \bar\phi_2(d_k,d_l)=&-2l(l+1)(l+2)\delta_{k+l,-2}-8l(l+1)(2l+1)\delta_{k+l,-1}\\
&-8l(2l-1)(2l+1)\delta_{k+l,0}.\\
\end{align*}

Note that $\bar\phi_2(d_k, d_l)= -2\bar\phi_1(d_k,d_l)$, and  $\bar\phi_2(e_k,e_l)= -2\bar\phi_1(e_k,e_l)$.

\end{thm}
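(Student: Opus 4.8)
My plan is to exploit the fact that each $\bar\phi_i$ is defined as a pullback. The lemma identifying $\mathrm{Der}(R)$ with $\mathrm{Der}(S)$ makes $\Phi_f:\mathrm{Der}(R)\to\mathrm{Der}(S)$ a Lie algebra isomorphism, and the cocycle identities (1) and (2) are preserved under any Lie algebra homomorphism; hence the bilinear maps $\bar\phi_i(u,v)=\phi_i(\Phi_f(u),\Phi_f(v))$ inherit skew-symmetry and the Jacobi-type identity directly from the proposition of Cox, Guo, Lu and Zhao, so they are $2$-cocycles essentially for free. For the assertion that they are not coboundaries, I would note that $\Phi_f$ induces an isomorphism on second cohomology, carrying the nontrivial classes $[\phi_i]$ to nontrivial classes $[\bar\phi_i]$; this is precisely the fact attributed to \cite{CJM}, which I would cite rather than reprove.

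The substance of the theorem is the table of values, and here the plan is computational: substitute the explicit expansions of $\Phi_f(t^kD)$ and $\Phi_f(t^kuD)$ from the preceding lemma into \eqref{eq:defn} and expand by bilinearity. This reduces every $\bar\phi_i$ to a double sum of binomial coefficients weighted by the values of $\phi_i$ on pairs of monomials $s^p\partial_s$ and $(s-1)^{-q}\partial_s$, which are read off from \eqref{4.1}. The first thing to organize is the case split dictated by the lemma: for $k\ge 0$ the image $\Phi_f(t^kD)$ is a polynomial in $s$, while for $k<0$ it is a polynomial in $(s-1)^{-1}$, so the relevant branch of $\phi_i$ depends on the signs of $k$ and $l$.

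For instance, in computing $\bar\phi_1(e_k,e_l)$ with $k,l\ge 0$, only the diagonal values $\phi_1(s^{p}\partial_s,s^{q}\partial_s)$, nonzero exactly when $p+q=2$, survive; this forces $b=k+l-a$ and collapses the double sum to
\[
\bar\phi_1(e_k,e_l)=(-1)^{k+l}\sum_{a}\binom{2k}{a}\binom{2l}{k+l-a}\bigl((k-a)^3-(k-a)\bigr).
\]
The Chu--Vandermonde convolution $\sum_a\binom{2k}{a}\binom{2l}{k+l-a}=\binom{2k+2l}{k+l}$, together with its weighted variants for the factors $(k-a)$ and $(k-a)^3$, then evaluates the sum in closed form and accounts for the Kronecker deltas $\delta_{k+l,0}$ and $\delta_{k+l,1}$. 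The mixed terms $\bar\phi_1(e_k,d_l)$ instead pair an $s$-expansion against a $(s-1)^{-1}$-expansion through the second formula for $\phi_i$, whose factor $\binom{k+l-2}{k-1}\bigl((k+l)^3-(k+l)\bigr)$ is what, after the binomial summations, assembles into the double factorial $(2k+2l-3)!!/(k+l+1)!$.

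I expect the main obstacle to be exactly this combinatorial endgame rather than the setup. The sign-dependent case split multiplies the number of sums to be evaluated, and several of them are not bare Vandermonde convolutions but convolutions weighted by low-degree polynomials in the summation index; matching these against the compact closed forms in the statement — in particular producing the double factorial in $\bar\phi_1(e_k,d_l)$ and verifying the vanishing away from the stated support — is where the identities for sums of binomial coefficients must be deployed with care. Once the four blocks for $\bar\phi_1$ are established, the companion relations for $\bar\phi_2$ follow by the same reductions, and the concluding observation $\bar\phi_2=-2\bar\phi_1$ on the $(d,d)$ and $(e,e)$ blocks can be read off either from the two computed tables or, more conceptually, by comparing the $a_1=0$ and $a_2=1$ branches of \eqref{4.1} directly.
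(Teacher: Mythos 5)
Your proposal is correct and follows essentially the same route as the paper: the cocycle and non-coboundary claims are inherited through the pullback along the isomorphism $\Phi_f$ (with non-triviality deferred to \cite{CJM}, as the paper also does), and the table of values is obtained exactly as in the paper's lemmas, by substituting the expansions of $\Phi_f(t^kD)$ and $\Phi_f(t^kuD)$, splitting into cases according to the signs of $k$ and $l$, and evaluating the resulting binomial double sums via Chu--Vandermonde and its polynomially weighted variants. The only imprecision is your attribution of the double factorial in $\bar\phi_1(e_k,d_l)$ solely to the off-diagonal formula of $\phi_1$: when $k,l\ge 0$ both arguments are $s$-expansions, so only the diagonal values $\delta_{k,l}(l^3-l)$ of \eqref{4.1} enter, and the double factorial already emerges there from central-binomial-type coefficients; the off-diagonal formula is responsible only in the mixed-sign cases.
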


This theorem will be used in \cite{CJM} to define cocycles on $Der(R)$ and construct 3-point Virasoro algebras and representations of the 3-point Virasoro and Current algebras.

  \section{Proof of the theorem}

   The proofs of the values of the cocycles involve interesting combinatorial identities of binomial coefficients, and the overall approach to computing the values of the cocycles is similar in some aspects to binomial inversion, where we have additional variables, and summations.

We will prove Theorem \ref{main} by applying a variety of identities for binomial coefficients. The general approach is often to apply the subset identity if necessary to eliminate a variable, then use the Chu-Vandermonde identity, and other well known properties of binomial coefficients. 
\begin{lem}\label{tktl}
For all $k,l\in\mathbb Z$ one has
\begin{align*}
\bar\phi_1(e_k,e_l)=\bar\phi_1(t^kD,t^lD)
 &=2 (l^2-l) (2l-1)\delta_{k+l,1} +(l^3-l)\delta_{k+l,0}.
\end{align*}
\end{lem}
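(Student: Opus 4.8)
The plan is to unfold \eqref{eq:defn} as $\bar\phi_1(e_k,e_l)=\phi_1(\Phi_f(t^kD),\Phi_f(t^lD))$, insert the explicit images $\Phi_f(t^kD)$ and $\Phi_f(t^lD)$ from the preceding lemma, and evaluate $\phi_1$ term by term. The key structural input is that $\phi_1$ (the case $a_1=0$, $a_2=1$) is supported on only two kinds of basis pairs: pairs of powers of $s$, where the $a_1=0$ branch of \eqref{4.1} forces the two $s$-exponents to sum to $2$ and assigns the Virasoro weight in \eqref{virasoroconstraint}; and mixed pairs $\phi_1(s^{-p}\partial,(s-1)^{-q}\partial)$ with $p,q\ge 1$, given by the Cox--Guo--Lu--Zhao formula. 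On every other pair of basis elements $\phi_1$ vanishes. Since both $\bar\phi_1$ and the claimed right-hand side are antisymmetric in $(k,l)$, I may organize the proof by the signs of $k$ and $l$ and assume $k\ge l$.

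Two regimes collapse at once. If $k,l\ge 0$ then $\Phi_f(t^kD)$ and $\Phi_f(t^lD)$ are Laurent polynomials in $s$, only the $s$-power branch contributes, and after imposing the exponent constraint the result is a single sum which, rewritten in a variable $c$ centered at the midpoint, reads $(-1)^{k+l}\sum_c\binom{2k}{k+c}\binom{2l}{l-c}(c^3-c)$; the weight $\binom{2k}{k+c}\binom{2l}{l-c}$ is invariant under $c\mapsto-c$, the factor $c^3-c$ is odd, and the range of $c$ is symmetric about $0$, so the sum is zero, consistent with the right-hand side (whose coefficients already vanish at the only admissible small values $k+l\in\{0,1\}$). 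If $k,l<0$ then both images are supported on $(s-1)^{-q}\partial$ terms, together with a constant $s^0\partial$ when $k$ or $l$ equals $-1$; every pair arising is either two $(s-1)$-vectors or involves the constant, and $\phi_1$ annihilates all of these, so $\bar\phi_1(e_k,e_l)=0$, again matching since $k+l\le-2$.

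The substantive regime is $k\ge 0>l$. Now only the strictly negative powers $s^{-p}\partial$, $1\le p\le k-1$, of $\Phi_f(t^kD)$ can pair nontrivially, through the mixed formula, with the terms $(s-1)^{-q}\partial$ of $\Phi_f(t^lD)$; in particular there is nothing to do unless $k\ge 2$, and for $l=-1$ the extra constant term of $\Phi_f(t^{-1}D)$ is harmless because the only $s$-power that could meet it, $s^2\partial$, does so with Virasoro weight $1^3-1=0$. Reading off the binomial coefficients of the two expansions and the sign $(-1)^{q+1}$ of the mixed formula, $\bar\phi_1(e_k,e_l)$ reduces to a double sum of the form
\[
(-1)^{k}\sum_{p,q}(-1)^{p+q}\binom{2k}{k-1-p}\binom{-l+1}{\,l+q+1\,}\binom{p+q-2}{p-1}\bigl((p+q)^3-(p+q)\bigr),
\]
taken over the admissible finite ranges, and the whole content of the lemma is the evaluation of this sum.

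To evaluate it I would set $N=p+q$, so that $\binom{p+q-2}{p-1}=\binom{N-2}{p-1}$ and the cubic $N^3-N$ leaves the inner sum, and then perform the inner convolution over $p$ at fixed $N$. Applying the subset identity $\binom{n}{m}\binom{m}{j}=\binom{n}{j}\binom{n-j}{m-j}$ to separate the overlapping indices, the inner sum becomes a Chu--Vandermonde convolution of $\binom{2k}{k-1-p}$ against $\binom{N-2}{p-1}$ (carrying the $q$-binomial along), collapsing to a single binomial in $N,k,l$; the surviving sum over $N$ weighted by $N^3-N$ is then a short alternating binomial sum. I expect the support of the collapsed binomial to be exactly what forces the answer to vanish away from $k+l\in\{0,1\}$ and to produce $(l^3-l)$ when $k+l=0$ and $2(l^2-l)(2l-1)$ when $k+l=1$. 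The main obstacle is precisely this last step: steering the signs, the shifted binomial arguments, and the $N^3-N$ weighting through Chu--Vandermonde so that all the bulk terms cancel and only the two boundary values $k+l=0,1$ survive---the point at which the combinatorial identities promised in the introduction are doing the real work.
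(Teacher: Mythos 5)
Your setup is right: the support analysis of $\phi_1$, the reduction by antisymmetry, and the first two regimes are all correct. In fact your parity argument for $k,l\ge 0$ (the weight $\binom{2k}{k+c}\binom{2l}{l-c}$ is even in $c$ by $\binom{n}{j}=\binom{n}{n-j}$, while $c^3-c$ is odd over a symmetric range) is cleaner than the paper's treatment, which evaluates four separate Chu--Vandermonde sums and checks they cancel; your $k,l<0$ support argument also matches the paper. The double sum you write down for $k\ge 0>l$ is the correct one. But the proof has a genuine gap exactly where you locate ``the real work'': the evaluation of
\[
(-1)^{k}\sum_{p,q}(-1)^{p+q}\binom{2k}{k-1-p}\binom{-l+1}{l+q+1}\binom{p+q-2}{p-1}\bigl((p+q)^3-(p+q)\bigr)
\]
is never carried out, and the strategy you propose for it fails as stated. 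If you fix $N=p+q$ and sum over $p$, then \emph{all three} binomials depend on $p$, because $q=N-p$ turns $\binom{-l+1}{l+q+1}$ into $\binom{-l+1}{l+1+N-p}$; the inner sum is a three-binomial (${}_3F_2$-type) sum, not a Chu--Vandermonde convolution, and the subset identity $\binom{n}{m}\binom{m}{j}=\binom{n}{j}\binom{n-j}{m-j}$ applies to none of the pairs present (no lower index coincides with another binomial's upper index). So ``carrying the $q$-binomial along'' is not available: it is not constant in the summation variable.

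The paper resolves precisely this point by convolving over $p$ at fixed $q$ rather than at fixed $N$, which requires writing the cocycle weight as a cubic in $q$ alone: one uses
\[
\phi_1(s^{-p}\partial,(s-1)^{-q}\partial)=(-1)^{q+1}\binom{p+q+1}{p-1}\bigl((q+1)^3-(q+1)\bigr)
\]
instead of the form $(-1)^{q+1}\binom{p+q-2}{p-1}\bigl((p+q)^3-(p+q)\bigr)$ that you chose. Then (i) the cubic in $q$ (equivalently in $b$, where $q=b-l-1$) is expanded in falling factorials $b(b-1)(b-2),\,b(b-1),\,b,\,1$; (ii) for each piece the sum over $a$ (your $p$) at fixed $b$ becomes, after the negation identity $(-1)^{j}\binom{M}{j}=\binom{j-M-1}{j}$, a genuine Chu--Vandermonde convolution $\sum_a\binom{2k}{a}\binom{l-b-2}{k-a-2}=\binom{2k+l-b-2}{k-2}$; (iii) the remaining single sums over $b$ are evaluated after splitting the range according to the supports of the binomials, and it is this case analysis that produces the boundary terms $\delta_{k+l,0}$, $\delta_{k+l,1}$; (iv) the leftover bulk terms cancel pairwise by symmetry, e.g. $\binom{2k+2l-3}{k+l-3}=\binom{2k+2l-3}{k+l}$ and $\binom{2k+2l-3}{k+l-2}=\binom{2k+2l-3}{k+l-1}$, leaving exactly $2(l^2-l)(2l-1)\delta_{k+l,1}+(l^3-l)\delta_{k+l,0}$. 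Without this evaluation (or an equivalent one), your argument establishes the lemma only in the regimes where both sides vanish, which is not where its content lies.
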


\begin{proof}
Note that if $k<-1, l = -1$, $l^3-l = 0$ so  $\bar\phi_1(u,v):=\phi_1(\Phi_f(u),\Phi_f(v))= 0$.
The case $ k \geq 0 , l = 1$ also results in $0$.

For $k,l\geq 0$, by \eqnref{virasoroconstraint}
\begin{align*}
&\bar\phi_1(t^kD,t^lD)=\phi_1\left( \sum_{a=0}^{2k}\binom{2k}{a}(-1)^{a}s^{-k+a+1}\partial_s,\sum_{b=0}^{2l}\binom{2l}{b}(-1)^{b}s^{-l+b+1}\partial_s\right)\\
&= \sum_{a=0}^{2k}\sum_{b=0}^{2l}\binom{2k}{a}\binom{2l}{b}(-1)^{a+b}\phi_1\left(s^{-k+a+1}\partial_s,s^{-l+b+1}\partial_s\right)\\
&= \sum_{a=0}^{2k}\sum_{b=0}^{2l}\binom{2k}{a}\binom{2l}{b}(-1)^{a+b}\delta_{k-a,-l+b}\left((-l+b)^3-(-l+b)\right)\\
&=(-1)^{k+l} \sum_{a=0}^{2k}\binom{2k}{a}\binom{2l}{k+l-a}\left((k-a)^3-(k-a)\right) \\ 
%&=(-1)^{k+l} \sum_{a=0}^{2k}\binom{2k}{a}\binom{2l}{k+l-a}\left(-a(a-1)(a-2)+3(k-1)a(a-1)+3k(k-1)a-(k^3-k)\right) \\ 
&=-(-1)^{k+l} \sum_{a=0}^{2k}\binom{2k}{a}\binom{2l}{k+l-a}(a^3-3a^2+2a)\\
&\quad+(3k-3)(-1)^{k+l} \sum_{a=0}^{2k}\binom{2k}{a}\binom{2l}{k+l-a}(a^2-a)\\ 
&\quad-3k(k-1)(-1)^{k+l} \sum_{a=0}^{2k}\binom{2k}{a}\binom{2l}{k+l-a}a\\
&\quad-(k^3-k)(-1)^{k+l} \sum_{a=0}^{2k}\binom{2k}{a}\binom{2l}{k+l-a}\\ 
&=-(-1)^{k+l} 2k(2k-1)(2k-2)\binom{2k+2l-3}{k+l}+(-1)^{k+l}3(k-1)2k(2k-1)\binom{2k+2l-2}{k+l}  \\ 
&\quad -(-1)^{k+l} 6k^2(k-1)\binom{2l+2k-1}{l+k}+(-1)^{k+l} (k^3-k)\binom{2l+2k}{l+k}  \\
%&=2k(2k-1)(-1)^{k+l}\left (-  (2k-2)\binom{2k+2l-3}{k+l}+ 3(k-1) \binom{2k+2l-2}{k+l}\right)  \\ 
%&\quad +(-1)^{k+l} k(k-1)\left(-6k \binom{2l+2k-1}{l+k}+ (k+1)\binom{2l+2k}{l+k}\right)   \\
%&=2k(2k-1)(-1)^{k+l}\left (-  (2k-2)\binom{2k+2l-3}{k+l}+ 3(k-1) \binom{2k+2l-2}{k+l}\right)  \\ 
%&\quad +(-1)^{k+l} k(k-1)\left(-4k+2\right) \binom{2l+2k-1}{l+k} \\  
%&=2k(k-1)(2k-1)(-1)^{k+l}\Big (- 2 \binom{2k+2l-3}{k+l}+ 3   \binom{2k+2l-2}{k+l}-  \binom{2l+2k-1}{l+k}\Big) \\ 
&=0.
\end{align*}

For $l\geq 0$, $k=-1$ by \eqnref{virasoroconstraint}
\begin{align*}
\bar\phi_1(t^{-1}D,t^lD)&=\phi_1\left( \left(1+\frac{2 }{s-1}+\frac{1}{(s-1)^2}\right)\partial_s,\sum_{a=0}^{2l}\binom{2l}{a}(-1)^{a}s^{-l+a+1}\partial_s\right)\\
%&=2\sum_{a=0}^{2l}\binom{2l}{a}(-1)^{a}\bar\phi_1\left( (s-1)^{-1}\partial_s,s^{-l+a+1}\partial_s\right)\\
%&\quad +\sum_{a=0}^{2l}\binom{2l}{a}(-1)^{a}\bar\phi_1\left( (s-1)^{-2} \partial_s,s^{-l+a+1}\partial_s\right)\\
%&=-2\sum_{a=0}^{l-2}\binom{2l}{a}(-1)^{a} (l-a+1)(l-a)(l-a-1)  \\ 
%&\quad +\sum_{a=0}^{l-2}\binom{2l}{a}(-1)^{a}(l-a+2)(l-a+1)(l-a)(l-a-1)\\
%&=\sum_{a=0}^{l-2}\binom{2l}{a}(-1)^{a}(l-a )^2((l-a)^2-1)\\
&=12\delta_{l,2}
\end{align*}
%as
%$$
%\sum_{a=0}^{2l}\binom{2l}{a}(-1)^{a}a^r=0
%$$
%for $l\geq 3$ and $r=0,1,2,3,4$.
and when $k,l<-1$, then by (\ref{virasoroconstraint}):
\begin{align*}
 \bar\phi_1(t^kD,t^lD)&=\sum_{a=0}^{-l+1}\sum_{b=0}^{-k+1}\binom{-l+1}{a}\binom{-k+1}{b}\delta_{l,-k}(l^3-l)\\
&=0.
\end{align*}
   
For $k\geq 0,l<-1$, by \eqnref{virasoroconstraint}
\begin{align*}
\bar\phi_1&(t^kD,t^lD)=\phi_1\left( \sum_{a=0}^{2k}\binom{2k}{a}(-1)^{a}s^{-k+a+1}\partial_s,\sum_{b=0}^{-l+1}\binom{-l+1}{b}(s-1)^{l-b+1}\partial_s\right)\\
&= \sum_{a=0}^{2k}\sum_{b=0}^{-l+1}\binom{2k}{a}(-1)^{a}\binom{-l+1}{b}\phi_1\left(s^{-k+a+1}\partial_s,(s-1)^{l-b+1}\partial_s\right) \\
&= \sum_{a=0}^{k-2}\sum_{b=0}^{-l+1}(-1)^{a+l-b}\binom{2k}{a}\binom{-l+1}{b}\frac{(k-a-l+b-1)!}{(k-a-2)!(-l+b-2)!} \\
&=\sum_{a=0}^{k-2}\sum_{b=0}^{-l+1}(-1)^{a+l-b}\binom{2k}{a}\binom{-l+1}{b}\binom{k-a-l+b-1}{-l+b+1}((-l+b)^3-(-l+b))\\
&=\sum_{a=0}^{k-2}\sum_{b=0}^{-l+1}(-1)^{a+l-b}\binom{2k}{a}\binom{-l+1}{b}\binom{k-a-l+b-1}{-l+b+1} b(b-1)(b-2) \\
&\quad -3(l-1)\sum_{a=0}^{k-2}\sum_{b=0}^{-l+1}(-1)^{a+l-b}\binom{2k}{a}\binom{-l+1}{b}\binom{k-a-l+b-1}{-l+b+1}b(b-1)  \\
&\quad +3 \left( l^2-l\right)\sum_{a=0}^{k-2}\sum_{b=0}^{-l+1}(-1)^{a+l-b}\binom{2k}{a}\binom{-l+1}{b}\binom{k-a-l+b-1}{-l+b+1}b \\
&\quad -(l^3-l)\sum_{a=0}^{k-2}\sum_{b=0}^{-l+1}(-1)^{a+l-b}\binom{2k}{a}\binom{-l+1}{b}\binom{k-a-l+b-1}{-l+b+1} \\
%&=(l^3-l)(-1)^{k+l}\binom{2k+2l-3}{k+l-3}-3(l-1)(l^2-l)(-1)^{k+l}\binom{2k+2l-3}{k+l-2} \\
%&\quad +3 \left( l^2-l\right)(-l+1)\Big(-\delta_{k+l,1}+(-1)^{k+l+1}\binom{2k+2l-3}{k+l-1}\Big) \\
%&\quad -(l^3-l)\left((-1)^{k+l}\binom{2k+2l-3}{k+l}-\delta_{k+l,1}-\delta_{k+l,0} \right)\\
%&=3 \left( l^2-l\right)(-l+1)\Big(-\delta_{k+l,1} \Big)  -(l^3-l)\left(-\delta_{k+l,1}-\delta_{k+l,0} \right)\\ \\
%&=-3 \left( l^3-l\right)\Big(-\delta_{k+l,1} \Big)  +6 \left( l^2-l\right)\Big(-\delta_{k+l,1} \Big) -(l^3-l)\left(-\delta_{k+l,1}-\delta_{k+l,0} \right)\\ 
&=2 (l^2-l) (2 l-1)\delta_{k+l,1} +(l^3-l)\delta_{k+l,0}.
\end{align*}

Where we have applied the identity $\binom{n}{k}=(-1)^k\binom{k-n-1}{k}$ for $n\geq0$ and also the Chu-Vandermonde convolution formula  $\sum_{k=0}^n\binom{s}{k}\binom{t}{n-k}=\binom{s+t}{n}$. For $n<0$ and $k\in\mathbb Z$ we take
$$
\binom{n}{k}=\begin{cases} (-1)^k\binom{-n+k-1}{k}&\quad \text{if } k\geq 0\\ 
(-1)^{n-k}\binom{-k-1}{n-k} &\quad\text{if }k\leq n\\
0&\quad\text{otherwise} 
\end{cases}
$$
In addition, we use for $n\geq 0$, $k\geq 0$ 
$\displaystyle{\binom{n}{k}=(-1)^k\binom{-n+k-1}{k}}$.

 Thus for $k+l\leq -2$, $2k+l-b-2=k-2-b+(k+l)\leq k-b-4$

\begin{align*}
\sum_{a=0}^{k-2}&\sum_{b=0}^{-l+1}(-1)^{a+l-b}\binom{2k}{a}\binom{-l+1}{b}\binom{k-a-l+b-1}{k-2-a} \\
&=\sum_{b=0}^{-l+1}(-1)^{k+l+b}\binom{-l+1}{b}\sum_{a=0}^{k-2}\binom{2k}{a}(-1)^{k-2-a}\binom{k-a-l+b-1}{k-2-a}\\
&=\sum_{b=0}^{-l+1}(-1)^{k+l+b}\binom{-l+1}{b}\sum_{a=0}^{k-2}\binom{2k}{a}\binom{l-b-2}{k-a-2}\\
&=\sum_{b=0}^{-l+1}(-1)^{k+l+b}\binom{-l+1}{b}\binom{2k+l-b-2}{k-2}\\
&=\sum_{b=0}^{-l+1}\binom{-l+1}{b}(-1)^{k+l-b}\binom{2k+l-b-2}{k+l-b}\\
&=\sum_{b=0\atop_{2k+l-b-2<0}}^{-l+1}\binom{-l+1}{b}(-1)^{k+l-b}\binom{2k+l-b-2}{k+l-b}\\
&\quad+\sum_{b=0\atop_{k+l-b<0\leq 2k+l-b-2}}^{-l+1}\binom{-l+1}{b}(-1)^{k+l-b}\binom{2k+l-b-2}{k+l-b}\\
&\quad+\sum_{b=0\atop_{0\leq k+l-b}}^{-l+1}\binom{-l+1}{b}(-1)^{k+l-b}\binom{2k+l-b-2}{k+l-b}\\
&=\sum_{b=0\atop_{2k+l-b-2<0}}^{-l+1}\binom{-l+1}{b}(-1)^{l-b}\binom{-k-l+b-1}{k-2}+\sum_{b=0\atop_{0\leq k+l-b}}^{-l+1}\binom{-l+1}{b}\binom{-k+1}{k+l-b}\\
&=-\sum_{b=0\atop_{2k+l-b-2<0}}^{-l+1}\binom{-l+1}{b} \binom{-k+1}{k+l-b}+\sum_{b=0\atop_{k+l-b\geq 0}}^{-l+1}\binom{-l+1}{b}\binom{-k+1}{k+l-b}\\
&=-\sum_{b= 2k+l-1}^{-l+1}\binom{-l+1}{b} \binom{-k+1}{k+l-b}+\sum_{b=0}^{\min\{k+l,-l+1\}}\binom{-l+1}{b}\binom{-k+1}{k+l-b}\\
&=(-1)^{k+l}\binom{2k+2l-3}{k+l}\delta_{k+l>1}\\
&=(-1)^{k+l}\binom{2k+2l-3}{k+l}-\delta_{k+l,1}-\delta_{k+l,0}.
\end{align*}

 Similar calculations give us
\begin{align*}
\sum_{a=0}^{k-2}&\sum_{b=0}^{-l+1}(-1)^{a+l-b}\binom{2k}{a}\binom{-l+1}{b}\binom{k-a-l+b-1}{k-2-a}b\\
& =(-l+1)\Big((-1)^{k+l+1}\binom{2k+2l-3}{k+l-1}-\delta_{k+l,1}\Big),
\end{align*}
%as 
%\begin{align*}
%\sum_{b\geq 2k+l-2}^{-l}\binom{-l}{b}(-1)^{b} =-\sum_{b=0}^{ 2k+l-3}\binom{-l}{b}(-1)^{b} =(-1)^{l-2}\frac{2k+l-3}{l}\binom{-l}{2k+l-3}.
%\end{align*}

\begin{align*}
\sum_{a=0}^{k-2}&\sum_{b=0}^{-l+1}(-1)^{a+l-b}\binom{2k}{a}\binom{-l+1}{b}\binom{k-a-l+b-1}{k-2-a}b(b-1) \\
%&=(l^2-l)\sum_{b=0}^{-l-1}(-1)^{k+l+b+2}\binom{-l-1}{b}\sum_{a=0}^{k-2}\binom{2k}{a}(-1)^{k-2-a}\binom{k-a-l+b+1}{k-2-a}\\
%&=(l^2-l)\sum_{b=0}^{-l-1}(-1)^{k+l+b}\binom{-l-1}{b}\sum_{a=0}^{k-2}\binom{2k}{a}\binom{l-b-4}{k-a-2}\\ 
%&=(l^2-l)\sum_{b=0}^{-l-1}(-1)^{k+l+b}\binom{-l-1}{b}\binom{2k+l-b-4}{k-2}\\   
%&=(l^2-l)\sum_{b=0}^{-l-1}\binom{-l-1}{b}(-1)^{k+l-b}\binom{2k+l-b-4}{k+l-b-2}\\ 
%&=(l^2-l)\Big(\sum_{b=0,2k+l-b<-4}^{-l-1}\binom{-l-1}{b}(-1)^{k+l-b+2}\binom{2k+l-b-4}{k+l-b-2}\\ 
%&+\sum_{b=0,k+l-b-2<0\leq 2k+l-b-4}^{-l}\binom{-l}{b}(-1)^{k+l-b}\binom{2k+l-b-4}{k+l-b-2}\\
%&+\sum_{b=0,0\leq k+l-b-1}^{-l-1}\binom{-l-1}{b}(-1)^{k+l-b-2}\binom{2k+l-b-4}{k+l-b-2}\Big)\\      
%&=(l^2-l)\Big(\sum_{b=2k+l-3}^{-l-1}\binom{-l-1}{b}(-1)^{l-b}\binom{-k-l+b+1}{k-2}+0\\
%&+\sum_{b=0}^{\min\{-l-1,k+l-1\}}\binom{-l-1}{b}\binom{-k+1}{k+l-b-2}\Big)\\  
%&=(l^2-l)\Big(\sum_{b=2k+l-3}^{-l-1}\binom{-l-1}{b}(-1)^{l-b}\binom{-k-l+b+1}{k-2}+0\\
%&+\sum_{b=0}^{\min\{-l-1,k+l-1\}}\binom{-l-1}{b}\binom{-k+1}{k+l-b-2}\Big)\\  
%&=(l^2-l)\Big(-\sum_{b=2k+l-3}^{-l-1}\binom{-l-1}{b}\binom{-k+1}{-2k-l+b+3}-\delta_{k+l,1}+\binom{-k-l}{k+l-2}\Big)\\ 
%&=(l^2-l)\Big(-\sum_{b=2k+l-3}^{-l-1}\binom{-l-1}{b}\binom{-k+1}{k+l-b-2}-\delta_{k+l,1}+\binom{-k-l}{k+l-2}\Big)\\
&=(l^2-l)(-1)^{k+l}\binom{2k+2l-3}{k+l-2},
\end{align*}

\begin{align*}
\sum_{a=0}^{k-2}&\sum_{b=0}^{-l+1}(-1)^{a+l-b}\binom{2k}{a}\binom{-l+1}{b}\binom{k-a-l+b-1}{k-2-a}b(b-1)(b-2) \\
&=(l^3-l)(-1)^{k+l}\binom{2k+2l-3}{k+l-3}.
\end{align*}

As the other cases are trivial, we have the result.
\end{proof}

\begin{lem}\label{tkutlu}
For all $k,l\in\mathbb Z$ one has
\begin{align*}
\bar\phi_1&(t^kuD,t^luD)=l (l+1) (l+2) \delta _{k+l,-2}+4 l (2 l+1) ( l+1) \delta _{k+l,-1}+4 l (2 l-1) (2 l+1) \delta _{k+l,0}.
\end{align*}
\end{lem}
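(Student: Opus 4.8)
The plan is to avoid the basis-by-basis expansions of \lemref{tktl} by recasting $\phi_1$ as a residue. I would first prove the auxiliary identity
$$\phi_1(g\,\partial_s,\,h\,\partial_s)=\operatorname{Res}_{s=0}\big(g'''(s)\,h(s)\,ds\big)\qquad\text{for all }g,h\in S,$$
where $\operatorname{Res}_{s=0}$ is the coefficient of $s^{-1}$ in the Laurent expansion about $s=0$. Both sides are bilinear, and the right side is skew since $\operatorname{Res}(g'''h)=-\operatorname{Res}(g\,h''')$ by integration by parts (the residue of a derivative is $0$), so it suffices to check agreement on the basis pairs of \eqnref{4.1}. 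On $(s^p\partial,s^q\partial)$ the residue is $p(p-1)(p-2)\delta_{p+q,2}$, which equals the Virasoro value $\delta_{p+q,2}\big((1-q)^3-(1-q)\big)$ of \eqnref{virasoroconstraint}; on $(s^{-m}\partial,(s-1)^{-n}\partial)$ expanding $(s-1)^{-n}$ about $0$ gives $(-1)^{n+1}\frac{(m+n+1)!}{(m-1)!(n-1)!}$, matching the cross term of \eqnref{4.1}; and on $((s-1)^{-m}\partial,(s-1)^{-n}\partial)$ and on $(s^a\partial,(s-1)^{-n}\partial)$ with $a\ge0$ both sides vanish. Each of these is a single-binomial computation rather than a double sum.

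With the residue formula in hand, the preceding lemma gives $\Phi_f(t^kuD)=g_k\,\partial_s$ with $g_k(s)=s^{-k}(s+1)(s-1)^{2k+1}$, so the statement collapses to the single residue
$$\bar\phi_1(d_k,d_l)=\operatorname{Res}_{s=0}\big(g_k'''(s)\,g_l(s)\,ds\big).$$
A pleasant feature is that the special values at $k=-1,-2$ listed in the preceding lemma no longer need separate treatment: $g_{-1}=s+2+2(s-1)^{-1}$ and its relatives are just rational functions to be expanded about $s=0$. Writing $g_k=s^{-k}P_k$ with $P_k(s)=(s+1)(s-1)^{2k+1}$ analytic and nonvanishing at $0$, and letting $c^{(k)}_j$ be the coefficient of $s^j$ in $P_k$, reading off the coefficient of $s^{-1}$ yields
$$\bar\phi_1(d_k,d_l)=\sum_{\substack{i,j\ge0\\ i+j=k+l+2}} c^{(k)}_j\,c^{(l)}_i\,(j-k)(j-k-1)(j-k-2).$$

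Since $c^{(k)}_j$ is supported on $j\ge0$, the constraint $i+j=k+l+2$ makes the sum empty, hence zero, whenever $k+l<-2$; for $k+l>0$ the sum is nonempty but must cancel, which is itself a Chu--Vandermonde identity, and the three diagonals $k+l\in\{-2,-1,0\}$ carry the nonzero values. I would evaluate these by substituting $c^{(k)}_j=q^{(k)}_j+q^{(k)}_{j-1}$, where $q^{(k)}_r$ is the coefficient of $s^r$ in $(s-1)^{2k+1}$ (an ordinary binomial for $k\ge0$, a generalized one for $k<0$), so that each diagonal becomes a Chu--Vandermonde convolution of exactly the type used in \lemref{tktl}. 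These should collapse to $l(l+1)(l+2)$, $4l(l+1)(2l+1)$, and $4l(2l-1)(2l+1)$ on $k+l=-2,-1,0$ respectively. Where convenient I would instead localize the residue at $s=1$ or $s=\infty$, using that $g_k'''g_l\,ds$ has poles only at $0,1,\infty$ so its three residues sum to zero.

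The main obstacle is this final evaluation: nailing the exact cubic-in-$l$ coefficients and the vanishing on the off-diagonals requires the binomial identities, and these must be applied uniformly across the sign ranges $k,l\ge0$, $k\ge0>l$, and $k,l<0$, keeping careful track of signs and of whether each $P_k$ is a polynomial or a power series. The residue reformulation cleanly governs \emph{where} a contribution can occur and removes the awkward low-degree special cases, but the magnitude on each surviving diagonal still rests on a Chu--Vandermonde identity of the same flavor as the ones powering \lemref{tktl}, and that is where the genuine computation lives.
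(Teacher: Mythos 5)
Your residue reformulation is correct, and it is a genuinely different route from the paper's. The auxiliary identity $\phi_1(g\,\partial_s,h\,\partial_s)=\operatorname{Res}_{s=0}(g'''h\,ds)$ does hold: on $(s^p\partial,s^q\partial)$ both sides equal $p(p-1)(p-2)\delta_{p+q,2}$; on $(s^{-m}\partial,(s-1)^{-n}\partial)$ the residue picks out the coefficient of $s^{m+2}$ in $(s-1)^{-n}$, giving $(-1)^{n+1}\frac{(m+n+1)!}{(m-1)!(n-1)!}$ as in \eqnref{4.1} with $a_1=0$, $a_2=1$; the remaining two types of basis pairs vanish on both sides; and skewness of the residue pairing follows because residues of exact differentials vanish, so bilinear extension is legitimate. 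Combined with the uniform expression $\Phi_f(t^kuD)=s^{-k}(s+1)(s-1)^{2k+1}\partial_s$, the lemma becomes a single residue, whereas the paper evaluates $\phi_1$ straight from its defining table, expanding both arguments in the basis $\{s^m\partial_s,(s-1)^m\partial_s\}$ and taming double binomial sums in separate cases ($k,l\ge0$; $k,l<-1$; $k\ge0,\,l<-1$; $k=-1$ or $l=-1$). Your route buys uniformity in $k$ (no special treatment of $k=-1,-2$), free vanishing for $k+l<-2$ from the support constraint $i+j=k+l+2$, and single convolutions on the three surviving diagonals in place of double sums; the paper's route costs more computation but needs no auxiliary characterization of $\phi_1$. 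Two caveats keep your text a plan rather than a finished proof. First, the values on the diagonals $k+l\in\{-2,-1,0\}$ and the cancellation for $k+l>0$ --- which you correctly identify as the computational core --- are asserted (``should collapse'') but not carried out, while the paper does execute its analogous, heavier computations. Second, the sum-of-residues trick only half-handles $k+l>0$: the form $g_k'''g_l\,ds$ is indeed regular at $s=1$ there (its order at $s=1$ is at least $2k+2l-1\ge 1$), but the residue at $s=\infty$ has no reason to vanish, since $g'''h\,ds$ is not coordinate-invariant; so that cancellation genuinely requires the Chu--Vandermonde identity you invoke. Spot checks confirm your reduction reproduces the lemma's values, e.g.\ $(k,l)=(1,-1)\mapsto -12$, $(1,-2)\mapsto -24$, $(2,-2)\mapsto -120$, $(-3,1)\mapsto 6$.
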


\begin{proof}
For $k,l\geq 0$, by \eqnref{virasoroconstraint}
\begin{align*}
\bar\phi_1(t^kuD,t^luD)&= \sum_{a=0}^{2k+1}\sum_{b=0}^{2l+1}\binom{2k+1}{a}\binom{2l+1}{b}(-1)^{a+b}\phi_1\left(s^{k-a+2}\partial_s,s^{l-b+2}\partial_s\right)\\
&\quad + \sum_{a=0}^{2k+1}\sum_{b=0}^{2l+1}\binom{2k+1}{a}\binom{2l+1}{b}(-1)^{a+b}\phi_1\left(s^{k-a+2}\partial_s,s^{l-b+1}\partial_s\right)\\
&\quad + \sum_{a=0}^{2k+1}\sum_{b=0}^{2l+1}\binom{2k+1}{a}\binom{2l+1}{b}(-1)^{a+b}\phi_1\left(s^{k-a+1}\partial_s,s^{l-b+2}\partial_s\right)\\
&\quad +\sum_{a=0}^{2k+1}\sum_{b=0}^{2l+1}\binom{2k+1}{a}\binom{2l+1}{b}(-1)^{a+b}\phi_1\left(s^{k-a+1}\partial_s,s^{l-b+1}\partial_s\right)\\ 
%&= \sum_{a=0}^{2k+1}\sum_{b=0}^{2l+1}\binom{2k+1}{a}\binom{2l+1}{b}(-1)^{a+b}\Big(\delta_{b,k+l-a+2}((k-a+1)^3-(k-a+1))\\
%&\qquad+\delta_{b,k+l-a+1}((k-a+1)^3-(k-a+1)) +\delta_{b,k+l-a+1}((k-a)^3-(k-a))\\
%&\qquad+\delta_{b,k+l-a}((k-a)^3-(k-a))\Big)\\
&= (-1)^{k+l}\sum_{a=0}^{2k+1}\binom{2k+1}{a}\binom{2l+1}{k+l-a}((k-a)^3-(k-a))\\
&\quad +(-1)^{k+l}\sum_{a=0}^{2k+1}\binom{2k+1}{a}\binom{2l+1}{k+l-a+2}((k-a+1)^3-(k-a+1))\\
&\quad -(-1)^{k+l}\sum_{a=0}^{2k+1}\binom{2k+1}{a}\binom{2l+1}{k+l-a+1}((k-a+1)^3-(k-a+1))\\
&\quad -(-1)^{k+l}\sum_{a=0}^{2k+1}\binom{2k+1}{a}\binom{2l+1}{k+l-a+1}((k-a)^3-(k-a))\\
&= \dfrac{(-1)^{k+l}6k(k-l)l}{(k+l-1)(k+l-2)}\binom{2k+2l-1}{k+l+2}-\frac{(-1)^{k+l}6kl (l-k)}{(k+l+2)(k+l+1)}\binom{2k+2l-1}{k+l}\\
&\quad -\dfrac{(-1)^{k+l}6kl}{ (k+l+1)}\binom{2k+2l-1}{k+l} -\dfrac{(-1)^{k+l} 6 kl}{(k+l+1)}\binom{2k+2l-1}{k+l}\\
&=0
\end{align*}
where using calculations similar to those applied in \lemref{tktl}, for example
%Note that
\begin{align*}
(-1)^{k+l}&\sum_{a=0}^{2k+1}\binom{2k+1}{a}\binom{2l+1}{k+l-a}
%\\ &=(-1)^{k+l}\sum_{a=0}^{\min\{2k+1,k+l\}}\binom{2k+1}{a}\binom{2l+1}{k+l-a}\\
%&\quad+(-1)^{k+l}\sum_{a=\min\{2k+1,k+l\}+1}^{2k+1}\binom{2k+1}{a}\binom{2l+1}{k+l-a}\\
%&=(-1)^{k+l}\binom{2k+2l+2}{k+l}+0\\&
=(-1)^{k+l}\binom{2k+2l+2}{k+l+2}.
\end{align*}

For $l,k<-1$ we have
\begin{align*}
\bar\phi_1(t^kuD,t^luD)&=\sum_{a=0}^{-k+1}\sum_{b=0}^{-l+1}\binom{-k+1}{a}\binom{-l+1}{b}\phi_1((s-1)^{a+2k+1}\partial_s,(s-1)^{b+2l+1}\partial_s)\\
&\quad+\sum_{a=0}^{-k}\sum_{b=0}^{-l+1}\binom{-k}{a}\binom{-l+1}{b}\phi_1((s-1)^{a+2k+1}\partial_s,(s-1)^{b+2l+1}\partial_s)\\
&\quad+\sum_{a=0}^{-k+1}\sum_{b=0}^{-l}\binom{-k+1}{a}\binom{-l}{b}\phi_1((s-1)^{a+2k+1}\partial_s,(s-1)^{b+2l+1}\partial_s)\\
&\quad+\sum_{a=0}^{-k}\sum_{b=0}^{-l}\binom{-k}{a}\binom{-l}{b}\phi_1((s-1)^{a+2k+1}\partial_s,(s-1)^{b+2l+1}\partial_s)\\
&=0.
\end{align*}

Now consider the case  $k\geq 0$ and $l<-1$.  We have
\begin{align*}
\bar\phi_1(t^kuD,t^luD)&=\sum_{a=0}^{2k+1}\sum_{b=0}^{-l+1}(-1)^{a}\binom{2k+1}{a}\binom{-l+1}{b}\phi_1(s^{k-a+2}\partial_s,(s-1)^{b+2l+1}\partial_s)\\
&\quad+\sum_{a=0}^{2k+1}\sum_{b=0}^{-l+1}(-1)^a\binom{2k+1}{a}\binom{-l+1}{b}\phi_1(s^{k-a+1}\partial_s,(s-1)^{b+2l+1}\partial_s)\\
&\quad+\sum_{a=0}^{2k+1}\sum_{b=0}^{-l}(-1)^a\binom{2k+1}{a}\binom{-l}{b}\phi_1(s^{k-a+2}\partial_s,(s-1)^{b+2l+1}\partial_s)\\
&\quad+\sum_{a=0}^{2k+1}\sum_{b=0}^{-l}(-1)^a\binom{2k+1}{a}\binom{-l}{b}\phi_1(s^{k-a+1}\partial_s,(s-1)^{b+2l+1}\partial_s)\\
\end{align*}
where
\begin{align*}
\sum_{a=0}^{2k+1}&\sum_{b=0}^{-l+1}(-1)^{a}\binom{2k+1}{a}\binom{-l+1}{b}\phi_1(s^{k-a+2}\partial_s,(s-1)^{b+2l+1}\partial_s)\\
%&=\sum_{a=k+3}^{2k+1}\sum_{b=0}^{-l+1}(-1)^{a+b}\binom{2k+1}{a}\binom{-l+1}{b} \binom{-k-2l+a-b-2}{-k+a-3}((-b-2l)^3-(-b-2l))\\
%&=-(-1)^{k+l}(-l+1)(-l)(-l-1)\left(\binom{2k+2l+2}{k+l+2}-\delta_{k+l,-2}\right) \\
%&\quad-3(-1)^{k+l}(2l+1)(-l+1)(-l)\left(\binom{2k+2l+1}{k+l+2}-\delta_{k+l, -2}+\delta_{k+l,-1}\right)\\
%&\quad- 6(-1)^{k+l}l(2l+1)(-l+1) \left(\binom{2k+2l}{k+l+2} -\delta_{k+l,-2} +2\delta_{k+l,-1}\right) \\
%&\quad- (-1)^{k+l}(2l+1)(2l)(2l-1)\left(\binom{2k+2l-1}{k+l+2}-\delta_{k+l,0}+3\delta_{k+l,-1}-\delta_{k+l,-2}\right) \\  \\
&=-(-1)^{k+l}(-l+1)(-l)(-l-1) \binom{2k+2l+2}{k+l+2}-3(-1)^{k+l}(2l+1)(-l+1)(-l)\binom{2k+2l+1}{k+l+2}\\
&\quad - 6(-1)^{k+l}l(2l+1)(-l+1)  \binom{2k+2l}{k+l+2}  - (-1)^{k+l}(2l+1)(2l)(2l-1)\binom{2k+2l-1}{k+l+2}\\
&\quad +l(l+1)(l+2)\delta_{k+l,-2}+3 l (l+1) (2 l+1) \delta _{k+l,-1}+  (2l+1)(2l)(2l-1) \delta_{k+l,0}.
\end{align*}

Analyzing the second term: 
\begin{align*}
\sum_{a=0}^{2k+1}&\sum_{b=0}^{-l+1}(-1)^{a}\binom{2k+1}{a}\binom{-l+1}{b}\phi_1(s^{k+1-a}\partial_s,(s-1)^{b+2l+1}\partial_s)\\
&=\sum_{a=0}^{k+1}\sum_{b=0}^{-l+1}(-1)^{a}\binom{2k+1}{a}\binom{-l+1}{b}\phi_1(s^{k+1-a}\partial_s,(s-1)^{b+2l+1}\partial_s)\\
&\quad+\sum_{a=k+2}^{2k+1}\sum_{b=0}^{-l+1}(-1)^{a}\binom{2k+1}{a}\binom{-l+1}{b}\phi_1(s^{k+1-a}\partial_s,(s-1)^{b+2l+1}\partial_s)\\
&=\sum_{a=k+2}^{2k+1}\sum_{b=0}^{-l+1}(-1)^{a+b}\binom{2k+1}{a}\binom{-l+1}{b}\binom{-k-2l+a-b-1}{-k-2+a}((-b-2l)^3-(-b-2l))\\
&=\sum_{a=0}^{k-1}\sum_{b=0}^{-l+1}(-1)^{a+b+k}\binom{2k+1}{a+k+2}\binom{-l+1}{b}\binom{a-2l-b+1}{a}((-b-2l)^3-(-b-2l))\\
&= (-1)^{k+l}(-l+1)(-l)(-l-1) \binom{2k+2l+2}{k+l+1}+3 (2l+1)  (-1)^{k+l}(-l+1)(-l) \binom{2k+2l+1}{k+l+1}\\
&\quad +6 (2l+1)l(-1)^{k+l}(-l+1)\binom{2k+2l}{k+l+1} +2l(2l-1)(2l+1)(-1)^{k+l}\binom{2k+2l-1}{k+l+1}\\
&\quad+2l(l+1)(2l+1) \delta _{k+l,-1}+ (2 l+1)2 l (2 l-1) \delta _{k+l,0}.
\end{align*}
which we obtain by simplifying as in the following example
%Note that
\begin{align*}
\sum_{a=0}^{k-1}&\sum_{b=0}^{-l+1}(-1)^{a+b+k}\binom{2k+1}{a+k+2}\binom{-l+1}{b}\binom{a-2l-b+1}{a}\\
%&=\sum_{a=0}^{k-1}\sum_{b=0}^{-l+1}(-1)^{a+b+k}\binom{2k+1}{a+k+2}\binom{-l+1}{b}\binom{a-2l-b+1}{-2l-b+1}\\
%&=-\sum_{a=0}^{k-1}(-1)^{a+k}\binom{2k+1}{a+k+2}\sum_{b=0}^{-l+1}\binom{-l+1}{b}\binom{-a-1}{-2l-b+1}\\
%&=-\sum_{a=0}^{k-1}(-1)^{a+k}\binom{2k+1}{a+k+2}\binom{-l-a}{-2l+1}\\
%&=-\sum_{a=0}^{k-1}(-1)^{a+k}\binom{2k+1}{a+k+2}\binom{-l-a}{l-a-1}\\
%&=-\sum_{a=0}^{k-1}\binom{a-k}{a+k+2}\binom{-l-a}{l-a-1}\\
%&=-\sum_{a=0}^{k-1}\binom{a-k}{-2k-2}\binom{-l-a}{-2l+1}\\
%&=-\sum_{a=-l+1}^{k-1}\binom{a-k}{-2k-2}\binom{-l-a}{-2l+1}\\
%&=-\sum_{a=0}^{k+l-2}\binom{a-k-l+1}{-2k-2}\binom{-1-a}{-2l+1}\\
%&=\sum_{a=0}^{k+l-2}\binom{a-k-l+1}{-2k-2}\binom{-1-a}{-2l+1}\delta_{k+l\geq2}\\
%&=\binom{-k-l+1}{-2k-2l}\delta_{k+l\geq2}\\
&=-(-1)^{k+l}\binom{2k+2l-1}{k+l+1}\delta_{k+l\geq2}\\
&=-(-1)^{k+l}\binom{2k+2l-1}{k+l+1} +\delta_{k+l,-1}-\delta_{k+l,0},
\end{align*}

Note that the third term
\begin{align*}
\sum_{a=0}^{2k+1}\sum_{b=0}^{-l}(-1)^a\binom{2k+1}{a}\binom{-l}{b}\phi_1(s^{k-a+2}\partial_s,(s-1)^{b+2l+1}\partial_s)
\end{align*}
 and the fourth term
\begin{align*}
\sum_{a=0}^{2k+1}\sum_{b=0}^{-l}(-1)^a\binom{2k+1}{a}\binom{-l}{b}\phi_1(s^{k-a+1}\partial_s,(s-1)^{b+2l+1}\partial_s)
\end{align*}
are very similar to the first and the second one. Performing similar calculations yields
\begin{align*}
\sum_{a=0}^{2k+1}&\sum_{b=0}^{-l}(-1)^{a}\binom{2k+1}{a}\binom{-l}{b}\bar\phi_1(s^{k-a+2}\partial_s,(s-1)^{b+2l+1}\partial_s)\\
&=-(-1)^{k+l}l(l+1)(l+2)\binom{2k+2l+2}{k+l+1}  +3(2l+1)(-1)^{k+l}l(l+1)\binom{2k+2l+1}{k+l}  \\ 
&\quad +6l(2l+1)(-1)^{k+l}l(l+1)\binom{2k+2l}{k+l-1} \  +(2l+1)(2l)(2l-1) (-1)^{k+l}\binom{2k+2l-1}{k+l-2}   \\ 
&\quad -l(l+1)(2l+1)\delta_{k+l,-1}+(2l-1)(2l)(2l+1)\delta_{k+l,0}
\end{align*}
and
%Now we need to study the last term. We have
\begin{align*}
\sum_{a=0}^{2k+1}&\sum_{b=0}^{-l}(-1)^{a}\binom{2k+1}{a}\binom{-l}{b}\phi_1(s^{k+1-a}\partial_s,(s-1)^{b+2l+1}\partial_s)\\
%&=\sum_{a=0}^{k-1}\sum_{b=0}^{-l}(-1)^{a+b+k}\binom{2k+1}{a+k+2}\binom{-l}{b}\binom{a-2l-b+1}{a}((-b-2l)^3-(-b-2l))\\
%\end{align*}
%where
%\begin{align*}
%\sum_{a=0}^{k-1}&\sum_{b=0}^{-l}(-1)^{a+b+k}\binom{2k+1}{a+k+2}\binom{-l}{b}\binom{a-2l-b+1}{a}\\
%&=(-1)^{k+l}\delta_{k+l\geq1}\binom{2k+2l-1}{k+l}-\delta_{k+l,0},
%\end{align*}
%\begin{align*}
%\sum_{a=0}^{k-1}&\sum_{b=0}^{-l}(-1)^{a+b+k}\binom{2k+1}{a+k+2}\binom{-l}{b}\binom{a-2l-b+1}{a}b\\
%&=(-1)^{k+l}(-l)\binom{2k+2l}{k+l},
%\end{align*}
%\begin{align*}
%\sum_{a=0}^{k-1}&\sum_{b=0}^{-l}(-1)^{a+b+k}\binom{2k+1}{a+k+2}\binom{-l}{b}\binom{a-2l-b+1}{a}b(b-1)\\
%&=(-1)^{k+l}(-l)(-l-1)\binom{2k+2l+1}{k+l},
%\end{align*}
%\begin{align*}
%\sum_{a=0}^{k-1}&\sum_{b=0}^{-l+1}(-1)^{a+b+k}\binom{2k+1}{a+k+2}\binom{-l+1}{b}\binom{a-2l-b+1}{a}b(b-1)(b-2)\\
%&=(-1)^{k+l}(-l)(-l-1)(-l-2)\binom{2k+2l+2}{k+l},
%\end{align*}
%and then
%\begin{align*}
%\sum_{a=0}^{2k+1}&\sum_{b=0}^{-l}(-1)^{a}\binom{2k+1}{a}\binom{-l}{b}\bar\phi_1(s^{k+1-a}\partial_s,(s-1)^{b+2l+1}\partial_s)\\
&=-(-1)^{k+l}(-l)(-l-1)(-l-2)\binom{2k+2l+2}{k+l}\\
&\quad   -3(2l+1)(-1)^{k+l}(-l)(-l-1)\binom{2k+2l+1}{k+l}\\
&\quad-6(2l+1)l(-1)^{k+l}(-l)\binom{2k+2l}{k+l}\\
&\quad-2l(2l+1)(2l-1)(-1)^{k+l} \binom{2k+2l-1}{k+l}-2l(2l+1)(2l-1)\delta_{k+l,0}.
\end{align*}

Adding all of the four summands we have for $k\geq 0$ and $l<-1$ we obtain
\begin{align*}
\bar\phi_1&(t^kuD,t^luD)%\\
=l (l+1) (l+2) \delta _{k+l,-2}+4 l (2 l+1) ( l+1) \delta _{k+l,-1}+4 l (2 l-1) (2 l+1) \delta _{k+l,0}.
\end{align*}

Finally we consider the case  $k\geq0$ and $l=-1$. We have
\begin{align*}
\bar\phi_1&(t^kuD,t^luD)=
-12\delta_{k,1}.
\end{align*}
%Note that for $k=1$ we have $12\sum_{a=1+3}^{2\cdot 1+1}(-1)^a\binom{2\cdot 1+1}{a}\binom{a-1}{3}=0$ and $-12(-1)^1\binom{2\cdot 1-3}{1-1}=12$. For this reason appears $-12\delta_{k,1}$.

Clearly, when $k=l=-1$ we obtain zero. For $k\leq-1$ and $l=-1$ we have
\begin{align*}
\bar\phi_1&(t^kuD,t^luD)=\\
&=\displaystyle\sum_{a=0}^{-k+1}\binom{-k+1}{a}\Big(\phi_1\left((s-1)^{a+2k+1}\partial_s,s\partial_s\right)+\phi_1\left((s-1)^{a+2k+1}\partial_s,2(s-1)^{-1}\partial_s\right)\Big)\\
&\quad+\displaystyle\sum_{a=0}^{-k}(-1)^a\binom{-k}{a}\Big(\phi_1\left((s-1)^{a+2k+1}\partial_s,s\partial_s\right)+\phi_1\left((s-1)^{a+2k+1}\partial_s,2(s-1)^{-1}\partial_s\right)\Big)\\
&=\displaystyle\sum_{a=0}^{-k+1}\binom{-k+1}{a}\Big(\phi_1\left((s-1)^{a+2k+1}\partial_s,s\partial_s\right)\Big)+\displaystyle\sum_{a=0}^{-k}\binom{-k}{a}\Big(\phi_1\left((s-1)^{a+2k+1}\partial_s,s\partial_s\right)\Big)\\
&=0.\\
\end{align*}

Combining all the above we have, for all $k,l\in\mathbb Z$,
\begin{align*}
\bar\phi_1&(t^kuD,t^luD)=l (l+1) (l+2) \delta _{k+l,-2}+4 l (2 l+1) ( l+1) \delta _{k+l,-1}
\\
&\quad+4 l (2 l-1) (2 l+1) \delta _{k+l,0}.
\end{align*}
\end{proof}

\begin{lem}
For all $k,l\in\mathbb Z$ one has
\begin{align*}
\bar\phi_1(t^kD,t^luD)=6(-1)^{k+l}2^{k+l}(k-1)kl  \dfrac{(2k+2l-3)!!}{ (k+l+1)!}.
\end{align*}
\end{lem}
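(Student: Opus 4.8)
The plan is to follow exactly the template of \lemref{tktl} and \lemref{tkutlu}: apply the isomorphism $\Phi_f$ to both arguments using the explicit formulas for $\Phi_f(t^kD)$ and $\Phi_f(t^luD)$ from the preceding lemma, expand $\bar\phi_1(t^kD,t^luD)=\phi_1(\Phi_f(t^kD),\Phi_f(t^luD))$ by bilinearity, insert the defining values \eqref{4.1} of $\phi_1$ on basis vectors, and reduce the resulting binomial sums with the negation identity $\binom{n}{k}=(-1)^k\binom{k-n-1}{k}$ (together with its piecewise version for $n<0$) and the Chu--Vandermonde convolution $\sum_{j}\binom{s}{j}\binom{t}{n-j}=\binom{s+t}{n}$. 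Because $\Phi_f(t^kD)$ is a single sum while $\Phi_f(t^luD)$ is a sum of two sums (the two pieces arising from the factor $s+1$ in $s^{-k}(s+1)(s-1)^{2k+1}$), each case produces \emph{exactly two} double summations, rather than the four that appeared in \lemref{tkutlu}. As before I would partition the argument according to the signs of $k$ and $l$, namely $k,l\ge 0$; $k\ge0,\,l<-1$; $k<-1,\,l\ge0$; and $k,l<-1$, treating the boundary values $k=-1$ and $l=-1$ separately by means of their explicit $(s-1)$-expansions.

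In each region I would substitute the appropriate value of $\phi_1$. When both factors are powers of the same local coordinate (powers of $s$ when $k,l\ge0$, powers of $s-1$ when $k,l<-1$) the Kronecker form $\phi_1(s^{m+1}\partial,s^{-n+1}\partial)=\delta_{m,n}(n^3-n)$ of \eqref{virasoroconstraint} applies, so one summation collapses outright and Chu--Vandermonde evaluates the other; when one factor is a power of $s$ and the other a power of $s-1$ the cross formula of \eqref{4.1} with $a_1=0,\,a_2=1$ applies, and both summations are reduced by the negation identity followed by Chu--Vandermonde, exactly as in the $k\ge0,\,l<-1$ portion of \lemref{tktl}. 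In every case I would first expand the cubic $(\cdot)^3-(\cdot)$ into the falling-factorial monomials in the summation index used previously, so that each monomial yields a single convolution that collapses the inner index to a central binomial coefficient; summing the remaining index against $\binom{-l+1}{b}$, $\binom{-l}{b}$, $\binom{2l+1}{b}$, as appropriate, then reduces the second layer.

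The essential difference from the two diagonal lemmas is that here the surviving terms do \emph{not} cancel to a delta-supported expression. The extra shifts $+2$ and $+1$ in the exponents of $\Phi_f(t^luD)$ displace the binomials just enough that, after assembling the two contributions, a single central binomial coefficient (times a rational prefactor in $k+l$) remains for all $k,l$ rather than canceling against the boundary corrections. Rewriting this central binomial through the double-factorial identity $(2n-1)!!=(2n)!/(2^{\,n}n!)$ converts the surviving factorial quotient into $(2k+2l-3)!!/(k+l+1)!$, and collecting the numerical constant together with the prefactor $(-1)^{k+l}2^{k+l}(k-1)k\,l$ produces the stated closed form.

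I expect the main obstacle to be the bookkeeping at the endpoints of the summation ranges, where the piecewise negative-binomial convention recorded in \lemref{tktl} changes a sign or annihilates a term. As in that lemma, the split of a single sum into the regions governed by the sign of expressions such as $2k+l-b-2$ and $k+l-b$ is where spurious low-order corrections are generated, and one must verify that in the present mixed case these corrections \emph{combine} with, rather than cancel, the central binomial so that no delta terms remain. The degenerate values $k\in\{0,1\}$, $l=0$, and the separate arguments for $k=-1$ and $l=-1$ must be checked directly and shown to agree with the vanishing forced by the prefactor $(k-1)k\,l$ and with the double-factorial convention in the range $k+l\le 1$. Finally, the antisymmetry $\bar\phi_1(d_l,e_k)=-\bar\phi_1(e_k,d_l)$ recorded in \thmref{main} furnishes an independent check on the overall sign.
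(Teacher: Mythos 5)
Your proposal follows essentially the same route as the paper's proof: the same case decomposition by the signs of $k$ and $l$ (with the boundary values $k=-1$ and $l=-1$ treated separately), the same expansion through $\Phi_f$ producing two double sums, the same insertion of the defining values of $\phi_1$ (Kronecker form on like powers, cross formula on mixed powers), the same reduction of the falling-factorial expansion of the cubic via the negation identity and Chu--Vandermonde, and the same final repackaging of the surviving central binomial coefficient into the double-factorial expression $(2k+2l-3)!!/(k+l+1)!$. The plan is correct and matches the paper's argument in structure and in all essential details.
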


\begin{proof}
Some of these calculations are similar to the calculations done in \propref{tkutlu} so we will omit some details. For $k,l\geq 0$, by \eqnref{virasoroconstraint}
\begin{align*}
\bar\phi_1&(t^kD,t^luD)= \sum_{a=0}^{2k}\sum_{a'=0}^{2l+1}\binom{2k}{a}\binom{2l+1}{a'}(-1)^{a+a'}\phi_1\left(s^{-k+a+1}\partial_s,s^{l-a'+2}\partial_s\right)\\
&\quad + \sum_{a=0}^{2k}\sum_{a'=0}^{2l+1}\binom{2k}{a}\binom{2l+1}{a'}(-1)^{a+a'}\phi_1\left(s^{-k+a+1}\partial_s,s^{l-a'+1}\partial_s\right)\\
%&= \sum_{a=0}^{2k}\sum_{a'=0}^{2l+1}\binom{2k}{a}\binom{2l+1}{a'}(-1)^{a+a'}\Big(\delta_{a-k,a'-l-1}+\delta_{a-k,a'-l}\Big)((a-k)^3-(a-k))\\
%&=-(-1)^{k+l}\sum_{a=0}^{2k}\binom{2k}{a}\binom{2l+1}{a-k+l-1}((a-k)^3-(a-k))\\
%&\quad +(-1)^{k+l}\sum_{a=0}^{2k}\binom{2k}{a}\binom{2l+1}{l-k+a}((a-k)^3-(a-k))\\
&= -(-1)^{k+l}\sum_{a=\max\{0,k-l-1\}}^{\min\{2k,k+l\}}\binom{2k}{a}\binom{2l+1}{a-k+l+1}((a-k)^3-(a-k))\\
&\quad +(-1)^{k+l}\sum_{a=\max\{0,k-l\}}^{\min\{2k,k+l+1\}}\binom{2k}{a}\binom{2l+1}{a-k+l}((a-k)^3-(a-k)).\\
&=-(-1)^{k+l}(2k)(2k-1)(2k-2)\binom{2k+2l-2}{k+l+1}-(3-3k)(-1)^{k+l}(2k)(2k-1)\binom{2k+2l-1}{k+l+1}\\
&\quad-3k(k-1)(-1)^{k+l}(2k)\binom{2k+2l}{k+l+1}+(k^3-k)(-1)^{k+l}\binom{2k+2l+1}{k+l+1}\\
&\quad+(-1)^{k+l}(2k)(2k-1)(2k-2)\binom{2k+2l-2}{k+l}+(3-3k)(-1)^{k+l}(2k)(2k-1)\binom{2k+2l-1}{k+l}\\
&\quad+3k(k-1)(-1)^{k+l}(2k)\binom{2k+2l}{k+l}-(k^3-k)(-1)^{k+l}\binom{2k+2l+1}{k+l}\\
&=6(-1)^{k+l}2^{k+l}(k-1)kl \dfrac{(2k+2l-3)!!}{(k+l+1)!}.
\end{align*}
%as 
%$$
%\binom{n}{k}-\binom{n}{k-1}
%=\frac{n!}{k!(n-k)!}-\frac{n!}{(k-1)!(n-k+1)!}=\frac{n!(n-k+1)}{k!(n-k+1)!}-\frac{n!k}{k!(n-k+1)!}
%=\frac{n+1-2k}{n+1}\binom{n+1}{k}.
%$$

Note that for $l=-1$ and $k\geq0$ we also have
%\begin{align*}
%\bar\phi_1(t^kD,t^luD)&= \sum_{a=0}^{2k+1}\binom{2k+1}{a}(-1)^a\bar\phi_1\Big(s^{-k+a+1}\partial_s,(s+(s-1)^{-1})\partial_s\Big)\\
%&= \sum_{a=0}^{2k+1}\binom{2k+1}{a}(-1)^a\bar\phi_1\Big(s^{-k+a+1}\partial_s,(s-1)^{-1}\partial_s\Big)  \\
%%+\binom{2k+1}{k-2}(-1)^{k-2}\bar\phi_1\Big(s^{-1}\partial_s,s\partial_s\Big)\\
%&= \sum_{a=k}^{2k+1}\binom{2k+1}{a}(-1)^a\dfrac{(k-a+1)!}{(k-a-2)!0!(-1)^2}\\
%&=6 \sum_{a=k}^{2k+1}\binom{2k+1}{a}(-1)^a\binom{k-a+1}{k-a-2}\\
%&=-6(-1)^k \sum_{a=0}^{k+1}\binom{2k+1}{a+k}\binom{-4}{-a-2}\\
%&=-6(-1)^k\binom{2k-3}{k-2}.
%\end{align*}
%
\begin{align*}
\bar\phi_1(t^kD,t^luD)&=
%2 \sum_{a=0}^{2k}\binom{2k}{a}(-1)^a\bar\phi_1\Big(s^{-k+a+1}\partial_s,(s+(s-1)^{-1})\partial_s\Big)\\
%&= 2\sum_{a=0}^{2k}\binom{2k}{a}(-1)^a\bar\phi_1\Big(s^{-k+a+1}\partial_s,(s-1)^{-1}\partial_s\Big)  \\
%&=2 \sum_{a=0}^{k-2}\binom{2k}{a}(-1)^a\dfrac{(k-a+1)!}{(k-a-2)!0!(-1)^2}\\
%&=-12\sum_{a=0}^{k-2}\binom{2k}{a}(-1)^a\binom{k-a+1}{k-a-2}\\
%&=-12(-1)^k \sum_{a=0}^{k-2}\binom{2k}{a}\binom{-4}{k-a-2}\\
%&=-12(-1)^k\binom{2k-4}{k-2} \\
%&=6(-1)^{k+(-1)}2^{k+(-1)}(k-1)k(-1)\frac{(2k+2(-1)-3)!!}{(k+(-1)+1)!}\\&=
6(-1)^{k+l}2^{k+l}(k-1)kl \dfrac{(2k+2l-3)!!}{(k+l+1)!}.
\end{align*}

Now observe that if $l,k<-1$ then we have
\begin{align*}
\bar\phi_1(t^kD,t^luD)&=\sum_{a=0}^{-k+1}\sum_{b=0}^{-l+1}\binom{-k+1}{a}\binom{-l+1}{b}\phi_1((s-1)^{k-a+1}\partial_s,(s-1)^{b+2l+1}\partial_s)\\
&\quad+\sum_{a=0}^{-k+1}\sum_{b=0}^{-l}\binom{-k+1}{a}\binom{-l}{b}\phi_1((s-1)^{k-a+1}\partial_s,(s-1)^{b+2l+1}\partial_s)\\
&=0,
%=6(-1)^{k+l}2^{k+l}(k-1)kl \dfrac{(2k+2l-3)!!}{(k+l+1)!}
\end{align*}
for $k=-1$ and $l<-1$ we have
\begin{align*}
\bar\phi_1(t^kD,t^luD)&=\displaystyle\sum_{a=0}^{-l+1}\binom{-l+1}{a}\phi_1\Big((2(s-1)^{-1}+(s-1)^{-2})\partial_s,(s-1)^{a+2l+1}\partial_s\Big)\\
&\quad+\displaystyle\sum_{a=0}^{-l}\binom{-l}{a}\phi_1\Big((2(s-1)^{-1}+(s-1)^{-2})\partial_s,(s-1)^{a+2l+1}\partial_s\Big)\\
&=0,
%=6(-1)^{k+l}2^{k+l}(k-1)kl \dfrac{(2k+2l-3)!!}{(k+l+1)!},
\end{align*}
for $l=-1$ and $k<-1$ we have
\begin{align*}
\bar\phi_1(t^kD,t^luD)&=\displaystyle\sum_{a=0}^{-k+1}\binom{-k+1}{a}\phi_1\Big((s-1)^{k-b+1}\partial_s,(s+(s-1)^{-1})\partial_s\Big)\\
&=0
%=6(-1)^{k+l}2^{k+l}(k-1)kl \dfrac{(2k+2l-3)!!}{(k+l+1)!}
\end{align*}
and for $k=l=-1$ we have
\begin{align*}
\bar\phi_1(t^kD,t^luD)&=\phi_1\Big((2(s-1)^{-1}+(s-1)^{-2})\partial_s,(s+2(s-1)^{-1})\partial_s\Big)\\
&=0.
%=6(-1)^{k+l}2^{k+l}(k-1)kl \dfrac{(2k+2l-3)!!}{(k+l+1)!}
\end{align*}

Combining the last four cases we have, for $l,k\leq-1$,
\begin{align*}
\bar\phi_1(t^kD,t^luD)=6(-1)^{k+l}2^{k+l}(k-1)kl \dfrac{(2k+2l-3)!!}{(k+l+1)!}.
\end{align*}

Now consider $k=-1$ and $l\geq0$ where
\begin{align*}
\bar\phi_1&(t^kD,t^luD)= \sum_{a=0}^{2l+1}\binom{2l+1}{a}(-1)^{a}\phi_1\Big((2(s-1)^{-1}+(s-1)^{-2})\partial_s,s^{l-a+2}\partial_s\Big)\\
&\quad+ \sum_{a=0}^{2l+1}\binom{2l+1}{a}(-1)^{a}\phi_1\Big((2(s-1)^{-1}+(s-1)^{-2})\partial_s,s^{l+1-a}\partial_s\Big)\\
&=-2 \sum_{a=l+3}^{2l+1}\binom{2l+1}{a}(-1)^{a}\dfrac{(-l+a)!}{0!(-l+a-3)!(-1)^2}- \sum_{a=l+3}^{2l+1}\binom{2l+1}{a}(-1)^{a}\dfrac{(-l+1+a)!}{1!(-l+a-3)!(-1)^3}\\
&\quad -2 \sum_{a=l+2}^{2l+1}\binom{2l+1}{a}(-1)^{a}\dfrac{(-l+a+1)!}{0!(-l+a-2)!(-1)^2}- \sum_{a=l+2}^{2l+1}\binom{2l+1}{a}(-1)^{a}\dfrac{(-l+a+2)!}{1!(-l+a-2)!(-1)^3}\\
%&=-12 \sum_{a=0}^{l-2}\binom{2l+1}{a+l+3}(-1)^{a+l+3}\binom{a+3}{a}+24 \sum_{a=0}^{l-2}\binom{2l+1}{a+l+3}(-1)^{a+l+3}\binom{a+4}{a}\\
%&\quad -12 \sum_{a=0}^{l-1}\binom{2l+1}{a+l+2}(-1)^{a+l+2}\binom{a+3}{a}+24 \sum_{a=0}^{l-1}\binom{2l+1}{a+l+2}(-1)^{a+l+2}\binom{a+4}{a}\\
%&=12(-1)^{l} \sum_{a=0}^{l-2}\binom{2l+1}{a+l+3}\binom{-4}{a}-24(-1)^{l} \sum_{a=0}^{l-2}\binom{2l+1}{a+l+3}\binom{-5}{a}\\
%&\quad -12(-1)^{l} \sum_{a=0}^{l-1}\binom{2l+1}{a+l+2}\binom{-4}{a}+24(-1)^{l} \sum_{a=0}^{l-1}\binom{2l+1}{a+l+2}\binom{-5}{a}\\
%&=12(-1)^{l}\binom{2l-3}{l-2}-24(-1)^{l}\binom{2l-4}{l-2}\\
%&\quad -12(-1)^{l} \binom{2l-3}{l-1}+24(-1)^{l} \binom{2l-4}{l-1}+12\delta_{l,1}\\
%&=-24(-1)^{l}\binom{2l-4}{l-2}+24(-1)^{l} \binom{2l-4}{l-1}+12\delta_{l,1}\\
&=6(-1)^{k+l}2^{k+l}(k-1)kl \dfrac{(2k+2l-3)!!}{(k+l+1)!}.
\end{align*}

For $k\geq 0$ and $l<-1$ we obtain by similar calculations to the previous cases
\begin{align*}
\bar\phi_1(t^kD,t^luD)&=\sum_{a=0}^{2k}\sum_{b=0}^{-l+1}(-1)^{a}\binom{2k}{a}\binom{-l+1}{b}\phi_1(s^{a-k+1}\partial_s,(s-1)^{b+2l+1}\partial_s)\\
&\quad+\sum_{a=0}^{2k}\sum_{b=0}^{-l}(-1)^a\binom{2k}{a}\binom{-l}{b}\phi_1(s^{a-k+1}\partial_s,(s-1)^{b+2l+1}\partial_s)\\
&=\sum_{a=0}^{k-2}\sum_{b=0}^{-l+1}(-1)^{a+b}\binom{2k}{a}\binom{-l+1}{b}\binom{k-a-2l-b-1}{-b-2l+1}((-b-2l)^3-(-b-2l))\\
&\quad+\sum_{a=0}^{k-2}\sum_{b=0}^{-l}(-1)^{a+b}\binom{2k}{a}\binom{-l}{b}\binom{k-a-2l-b-1}{-b-2l+1}((-b-2l)^3-(-b-2l))\\
&=6(-1)^{k+l}2^{k+l}(k-1)kl \dfrac{(2k+2l-3)!!}{(k+l+1)!}
\end{align*}
Where we are using computations such as:
\begin{align*}
\sum_{a=0}^{k-2}&\sum_{b=0}^{-l+1}(-1)^{a+b}\binom{2k}{a}\binom{-l+1}{b}\binom{k-a-2l-b-1}{-b-2l+1}\\
%&=\sum_{a=0}^{k-2}\sum_{b=0}^{-l+1}(-1)^{a+1}\binom{2k}{a}\binom{-l+1}{b}\binom{a-k+1}{-b-2l+1}\\
%&=\sum_{a=0}^{k-2}(-1)^{a+1}\binom{2k}{a}\binom{a-k-l+2}{-2l+1}\\
%&=\sum_{a=0}^{k-2}(-1)^{a+1}\binom{2k}{a}\binom{a-k-l+2}{a+l-k+1}\\
%&=-(-1)^{l+k}\sum_{a=0}^{k-2}\binom{2k}{a}\binom{2l-2}{a+l-k+1}\\
%&=-(-1)^{l+k}\sum_{a=0}^{k-2}\binom{2k}{a}\binom{2l-2}{-a+l+k-3}\\
%&=-(-1)^{l+k}\binom{2k+2l-2}{k+l-3}-\delta_{k+l,-1}-2\delta_{k+l,0}\\
&=-(-1)^{l+k}\binom{2k+2l-2}{k+l+1}-\delta_{k+l,-1}-2\delta_{k+l,0},
\end{align*}
%and
%\begin{align*}
%\sum_{a=0}^{k-2}&\sum_{b=0}^{-l+1}(-1)^{a+b}\binom{2k}{a}\binom{-l+1}{b}\binom{k-a-2l-b-1}{-b-2l+1}b\\
%&=-(-1)^{l+k}(-l+1)\binom{2k+2l-1}{k+l+1}+(l-1)\delta_{k+l,-1}+(l-1)\delta_{k+l,0},
%\end{align*}
%\begin{align*}
%\sum_{a=0}^{k-2}&\sum_{b=0}^{-l+1}(-1)^{a+b}\binom{2k}{a}\binom{-l+1}{b}\binom{k-a-2l-b-1}{-b-2l+1}b(b-1)\\
%&=-(-1)^{l+k}(-l+1)(-l)\binom{2k+2l}{k+l+1}-l(l-1)\delta_{k+l,-1},
%\end{align*}
%\begin{align*}
%\sum_{a=0}^{k-2}&\sum_{b=0}^{-l+1}(-1)^{a+b}\binom{2k}{a}\binom{-l+1}{b}\binom{k-a-2l-b-1}{-b-2l+1}b(b-1)(b-2)\\
%&=-(-1)^{l+k}(-l+1)(-l)(-l-1)\binom{2k+2l+1}{k+l+1}-(l+1)l(l-1)\delta_{k+l,-1},
%\end{align*}
%
%Similar calculations give us
and
\begin{align*}
\sum_{a=0}^{k-2}&\sum_{b=0}^{-l}(-1)^{a+b}\binom{2k}{a}\binom{-l}{b}\binom{k-a-2l-b-1}{-b-2l+1}\\
&=(-1)^{l+k}\binom{2k+2l-2}{k+l}-\delta_{k+l,0}.
\end{align*}
%\begin{align*}
%\sum_{a=0}^{k-2}&\sum_{b=0}^{-l}(-1)^{a+b}\binom{2k}{a}\binom{-l}{b}\binom{k-a-2l-b-1}{-b-2l+1}b\\
%&=(-1)^{l+k}(-l)\binom{2k+2l-1}{k+l}+l\delta_{k+l,0},
%\end{align*}
%\begin{align*}
%\sum_{a=0}^{k-2}&\sum_{b=0}^{-l}(-1)^{a+b}\binom{2k}{a}\binom{-l}{b}\binom{k-a-2l-b-1}{-b-2l+1}b(b-1)\\
%&=(-1)^{l+k}(-l)(-l-1)\binom{2k+2l}{k+l},
%\end{align*}
%and
%\begin{align*}
%\sum_{a=0}^{k-2}&\sum_{b=0}^{-l}(-1)^{a+b}\binom{2k}{a}\binom{-l}{b}\binom{k-a-2l-b-1}{-b-2l+1}b(b-1)(b-2)\\
%&=(-1)^{l+k}(-l)(-l-1)(-l-2)\binom{2k+2l+1}{k+l}.
%\end{align*}

%Using similar calculations done earlier we obtain
%\begin{align*}
%\bar\phi_1(t^kD,t^luD)=6(-1)^{k+l}2^{k+l}(k-1)kl \dfrac{(2k+2l-3)!!}{(k+l+1)!}
%\end{align*}
%as desired.

Finally, for $k<-1$ and $l\geq 0$, we have
\begin{align*}
\bar\phi_1&(t^kD,t^luD)=\sum_{a=0}^{-k+1}\sum_{b=0}^{2l+1}(-1)^{b}\binom{-k+1}{a}\binom{2l+1}{b}\phi_1((s-1)^{k+1-a}\partial_s,s^{l-b+2}\partial_s)\\
&\quad+\sum_{a=0}^{-k+1}\sum_{b=0}^{2l+1}(-1)^{b}\binom{-k+1}{a}\binom{2l+1}{b}\phi_1((s-1)^{k+1-a}\partial_s,s^{l+1-b}\partial_s)\\
&=-\sum_{a=0}^{-k+1}\sum_{b=l+3}^{2l+1}(-1)^{a+b-k}\binom{-k+1}{a}\binom{2l+1}{b} \binom{a-k+b-l-2}{a-k+1} (a-k)^3-(a-k))\\
&\quad-\sum_{a=0}^{-k+1}\sum_{b=l+2}^{2l+1}(-1)^{a+b-k}\binom{-k+1}{a}\binom{2l+1}{b}\binom{ a-k+ b-l -1}{a-k+1 }((a-k)^3-(a-k))\\
&=6(-1)^{k+l}2^{k+l}(k-1)kl \dfrac{(2k+2l-3)!!}{(k+l+1)!}.
\end{align*}
%because (doing similar calculations done in the previous case) we have
%\begin{align*}
%\sum_{a=0}^{-k+1}&\sum_{b=l+3}^{2l+1}(-1)^{a+b-k}\binom{-k+1}{a}\binom{2l+1}{b} \binom{a-k+b-l-2}{a-k+1}\\
%&=\sum_{a=0}^{k-2}\sum_{b=0}^{-l+1}(-1)^{a+1}\binom{2k}{a}\binom{-l+1}{b}\binom{a-k+1}{-b-2l+1}\\
%&=-(-1)^{l+k}\binom{2k+2l-2}{k+l+1}-\delta_{k+l,-1}-2\delta_{k+l,0},
%\end{align*}
%\begin{align*}
%\sum_{a=0}^{-k+1}&\sum_{b=l+3}^{2l+1}(-1)^{a+b-k}\binom{-k+1}{a}\binom{2l+1}{b} \binom{a-k+b-l-2}{a-k+1}a\\
%&=(-1)^{l+k}(-k+1)\binom{2k+2l-2}{k+l}-(l+1)\delta_{k+l,0},
%\end{align*}
%\begin{align*}
%\sum_{a=0}^{-k+1}&\sum_{b=l+3}^{2l+1}(-1)^{a+b-k}\binom{-k+1}{a}\binom{2l+1}{b} \binom{a-k+b-l-2}{a-k+1}a(a-1)\\
%&=-(-1)^{l+k}(-k+1)(-k)\binom{2k+2l-2}{k+l-1},
%\end{align*}
%\begin{align*}
%\sum_{a=0}^{-k+1}&\sum_{b=l+3}^{2l+1}(-1)^{a+b-k}\binom{-k+1}{a}\binom{2l+1}{b} \binom{a-k+b-l-2}{a-k+1}a(a-1)(a-2)\\
%&=(-1)^{l+k}(-k+1)(-k)(-k-1)\binom{2k+2l-2}{k+l-2},
%\end{align*}
%
%Join everything we obtain, for $k<-1$ and $l\geq 0$,
%\begin{align*}
%\bar\phi_1(t^kD,t^luD)=6(-1)^{k+l}2^{k+l}(k-1)kl \dfrac{(2k+2l-3)!!}{(k+l+1)!}.
%\end{align*}

This finishes the proof.
\end{proof}

For the second cocycle we have the following proposition. 
\begin{prop}
For all $k,l\in\mathbb Z$ one has
\begin{align*}
\bar\phi_2(t^kD,t^lD)=-2\bar\phi_1(t^kD,t^lD),
\end{align*}
\begin{align*}
\bar\phi_2(t^kD,t^luD)=0,
\end{align*}
\begin{align*}
\bar\phi_2(t^kuD,t^luD)=-2\bar\phi_1(t^kuD,t^luD).
\end{align*}
\end{prop}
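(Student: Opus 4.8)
The plan is to bypass all new binomial computation and instead reduce the entire proposition to a single $\mathbb Z/2$-symmetry together with one linear relation among the point cocycles on $\text{Der}(S)$, so that the three asserted formulas fall out of the already-computed values of $\bar\phi_1$. First I would introduce the ring involution $\tau:R\to R$ fixing $t$ and sending $u\mapsto -u$; it is well defined since it preserves $u^2=t^2+4t$. A one-line computation gives $\tau_*(D)=-D$ (evaluate on $t$ and $u$), whence $\tau_*(e_k)=\tau(t^k)\tau_*(D)=-e_k$ and $\tau_*(d_k)=\tau(t^ku)\tau_*(D)=d_k$; thus $\tau_*$ is exactly the grading making each $e_k$ odd and each $d_k$ even. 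On the $S$-side, the formulas \eqnref{isom1} show that $u\mapsto -u$ corresponds to $\sigma:s\mapsto s^{-1}$, since it interchanges $\phi(s)$ and $\phi(s^{-1})$; hence the ring isomorphism $f$ intertwines the two involutions, i.e. $\Phi_f\circ\tau_*=\sigma_*\circ\Phi_f$.

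The key structural input is the relation $\phi_2=-\phi_1-\sigma^*\phi_1$ on $\text{Der}(S)$, where $(\sigma^*\phi_1)(X,Y):=\phi_1(\sigma_*X,\sigma_*Y)$. Conceptually this is the residue theorem: $\phi_1$ and $\phi_2$ are the point cocycles at $s=0$ and $s=1$, the three point cocycles at $0,1,\infty$ sum to zero because the total residue of a global meromorphic form on the sphere vanishes, and $\sigma$ fixes $1$ while interchanging $0$ and $\infty$, so $\sigma^*\phi_1$ is the point cocycle at $\infty$. Granting this, pulling back along $\Phi_f$ and using $\sigma_*\Phi_f=\Phi_f\tau_*$ gives, for all $x,y\in\text{Der}(R)$,
$$
\bar\phi_2(x,y)=-\bar\phi_1(x,y)-\bar\phi_1(\tau_*x,\tau_*y).
$$
Substituting $\tau_*e_k=-e_k$ and $\tau_*d_k=d_k$ then yields $\bar\phi_2(e_k,e_l)=-2\bar\phi_1(e_k,e_l)$ and $\bar\phi_2(d_k,d_l)=-2\bar\phi_1(d_k,d_l)$, while on mixed pairs $\bar\phi_2(e_k,d_l)=-\bar\phi_1(e_k,d_l)+\bar\phi_1(e_k,d_l)=0$, which are precisely the three claimed formulas.

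The main obstacle is making $\phi_2=-\phi_1-\sigma^*\phi_1$ hold exactly rather than merely modulo a coboundary: a coboundary $(x,y)\mapsto\beta([x,y])$ can be nonzero on exactly the basis pairs $(e_k,e_l)$, $(d_k,d_l)$, $(e_k,d_l)$ at issue, so the reduction is only as strong as this identity. I would close it either by checking that the formula-defined cocycles of \cite{MR3211093} are the residue cocycles of one globally defined form, so that $\sum_p\mathrm{Res}_p=0$ forces the relation on the nose, or by verifying $\phi_2=-\phi_1-\sigma^*\phi_1$ directly on the basis $\{s^k\partial_s,(s-1)^{-l}\partial_s\}$ via \eqnref{4.1}. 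A convenient intermediate fact is that $\phi_2$ is supported only on pairs consisting of one $s$-power and one $(s-1)$-power, and on the common ``mixed'' region it already equals $-\phi_1$ (a direct comparison of the two explicit formulas); should the conceptual route resist being made exact, one can instead compute $\bar\phi_2$ by brute force exactly as in \lemref{tktl} and \lemref{tkutlu}, the computations being shorter because of this restricted support.
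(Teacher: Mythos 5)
Your proposal takes a genuinely different route from the paper. The paper proves this proposition by the same brute-force method as Lemmas \ref{tktl} and \ref{tkutlu}: it expands $\Phi_f(t^kD)$ and $\Phi_f(t^kuD)$ in the basis of $\text{Der}(S)$, evaluates $\phi_2$ term by term via double sums of binomial coefficients (Chu--Vandermonde and subset identities), and writes out only the representative case $k\geq 0$, $l<-1$, asserting the remaining cases are similar. Your reduction is structurally sound in every detail: $\tau_*D=-D$, hence $\tau_*e_k=-e_k$ and $\tau_*d_k=d_k$; the intertwining $f\tau=\sigma f$ holds because $\sigma:s\mapsto s^{-1}$ fixes $f(t)=s^{-1}(s-1)^2$ and negates $f(u)=s-s^{-1}$, so $\Phi_f\tau_*=\sigma_*\Phi_f$; and granting $\phi_2=-\phi_1-\sigma^*\phi_1$, the parity argument instantly yields all three formulas for all $k,l$ simultaneously. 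This buys two things the paper's proof does not: it explains, rather than merely observes, the pattern recorded after Theorem~\ref{main} (that $\bar\phi_2=-2\bar\phi_1$ on pure pairs and $\bar\phi_2(e_k,d_l)=0$), and it treats all sign cases of $k,l$ uniformly instead of one representative case.

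The one load-bearing step you outline but do not carry out is the exact identity $\phi_2=-\phi_1-\sigma^*\phi_1$, and you are right that this is where the risk sits, since a coboundary discrepancy would destroy the reduction. The identity is in fact true on the nose, and your first proposed route does close it: from \eqnref{4.1} and the mixed-pair formula one checks on every pair of basis elements that $\phi_1(g\partial,h\partial)=\mathrm{Res}_{s=0}(g'''h\,ds)$ and $\phi_2(g\partial,h\partial)=\mathrm{Res}_{s=1}(g'''h\,ds)$; for instance
$$
\mathrm{Res}_{s=1}\bigl((s^{k+1})'''(s-1)^{-l+1}\,ds\bigr)=(k+1)k(k-1)\binom{k-2}{l-2}=\binom{k+1}{l+1}(l^3-l),
$$
and the analogous check on the pairs $(s^{-k}\partial,(s-1)^{-l}\partial)$ reproduces the factorial formula. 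Since $g'''h\,ds$ is a single global meromorphic form with poles only at $0,1,\infty$, the residue theorem gives $\phi_1+\phi_2+\mathrm{Res}_\infty=0$, and $\mathrm{Res}_\infty(g'''h\,ds)=(\sigma^*\phi_1)(g\partial,h\partial)$ because $w=s^{-1}$ is a M\"obius change of coordinate, so the Schwarzian anomaly in the transformation of the cocycle density vanishes (this last equality can also be verified directly on basis pairs, where it reduces to single-sum Vandermonde identities). Your intermediate observation that $\phi_2=-\phi_1$ on pairs of one negative $s$-power and one negative $(s-1)$-power, with $\sigma^*\phi_1$ vanishing there, is also correct and consistent with this. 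So as written your text is a correct plan with one lemma deferred; once that lemma is included, it is a complete proof, and a shorter and more conceptual one than the computation in the paper.
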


\begin{proof}
The calculations here are in the same spirit as the  to the calculations done for the first cocycle and, for this reason, we will do only the calculations for $k\geq0$ and $l<-1$. We have
\begin{align*}
\bar\phi_2 (t^kD,t^lD) &=   \sum_{a= 0}^{ 2k}\sum_{b=0}^{-l+1}  (-1)^a\binom{2k}{a}   \binom{-l+1}{b}  \phi_2( s^{-k +a +1 } \partial, (s-1)^{l-b+1} \partial)\\
& =  \sum_{a= 0}^{ 2k}\sum_{b=0}^{-l+1}  (-1)^a\binom{2k}{a}   \binom{-l+1}{b} \delta_{-k+a \geq 0} \binom{a-k +1 }{-l+b +1} ((-l+b)^3-(-l+b))\\
&\quad-\sum_{a= 0}^{2k}\sum_{b=0}^{-l+1}  (-1)^a\binom{2k}{a}   \binom{-l+1}{b} \delta_{-k+a< 0}\dfrac{((-l+b-1)+(k-a-1)+1)!}{(b-l-2)!(k-a-2)!(-1)^{b-l}}\\
& =  \sum_{a= k}^{ 2k}\sum_{b=0}^{-l+1}  (-1)^a\binom{2k}{a}   \binom{-l+1}{b} \binom{a-k +1 }{-l+b +1} ((-l+b)^3-(-l+b))\\
&\quad-\sum_{a= 0}^{ k-1}\sum_{b=0}^{-l+1}  (-1)^a\binom{2k}{a}   \binom{-l+1}{b} (-1)^{b+l}\binom{b-l+k-a-1 }{k-a+1} ((k-a)^3-(k-a))\\
&=(-(l^3-l)\delta_{l+k,0}+l(l-1)(2-4l)\delta_{l+k,1})\\
&\quad-(l(l+1)(l-1)\delta_{k+l,0}+l(2l-1)(2l-2)\delta_{k+l,1})\\
&=-2(l^3-l)\delta_{l+k,0}-4l(l-1)(2l-1)\delta_{k+l,1}
\end{align*}
where we use, for example,
\begin{align*}
\sum_{a= k}^{ 2k}\sum_{b=0}^{-l+1}  (-1)^a\binom{2k}{a}\binom{-l+1}{b} \binom{a-k +1 }{-l+b +1}
%&=\sum_{a= k}^{ 2k}\sum_{b=0}^{-l+1}  (-1)^a\binom{2k}{a}   \binom{-l+1}{b} \binom{a-k +1 }{l-b+a-k}\\
%&=\sum_{a= k}^{ 2k} (-1)^a\binom{2k}{a}   \binom{a-k-l+2 }{l+a-k}\\
%&=(-1)^{l+k}\sum_{a= k}^{ 2k}\binom{2k}{a}   \binom{2l-3 }{l+a-k}\\
%&=(-1)^{l+k}\sum_{a= 0}^{ k}\binom{2k}{2k-a}   \binom{2l-3 }{l+k-a}\\
%&=(-1)^{l+k}\sum_{a= 0}^{ k}\binom{2k}{a}   \binom{2l-3 }{-a+l+k}\\
&=(-1)^{l+k}\binom{2k+2l-3}{l+k}
\end{align*}
and
\begin{align*}
\sum_{a= 0}^{ k-1}\sum_{b=0}^{-l+1}  (-1)^a\binom{2k}{a}   \binom{-l+1}{b} (-1)^{b+l}\binom{b-l+k-a-1 }{k-a+1}
%&=\sum_{a= 0}^{ k-1}\sum_{b=0}^{-l+1}  (-1)^a\binom{2k}{a}   \binom{-l+1}{b} (-1)^{b+l}\binom{b-l+k-a-1 }{b-l-2}\\
%&=-\sum_{a= 0}^{ k-1}\sum_{b=0}^{-l+1}  (-1)^a\binom{2k}{a}   \binom{-l+1}{b} \binom{k-a-2}{b-l-2}\\
%&=-\sum_{a= 0}^{ k-1}\sum_{b=0}^{-l+1}  (-1)^a\binom{2k}{a}   \binom{-l+1}{b} \binom{k-a-2}{k+l-a-b}\\
%&=-\sum_{a= 0}^{ k-1}  (-1)^a\binom{2k}{a}   \binom{k-l-a-1}{k+l-a}\\
%&=-(-1)^{k+l}\sum_{a= 0}^{ k-1} \binom{2k}{a}   \binom{2l}{k+l-a}\\
&=-(-1)^{k+l}\binom{2k+2l}{k+l}.
\end{align*}

Now consider 
\begin{align*}
\bar\phi_2 (t^kD,t^luD) &=\sum_{a=0}^{2k}\sum_{b=0}^{-l+1}(-1)^a\binom{2k}{a}\binom{-l+1}{b}\phi_2( s^{a-k+1} \partial_s,   (s-1)^{b+2l+1} \partial_s) \\
&\quad + \sum_{a=0}^{2k}\sum_{b=0}^{-l}(-1)^a\binom{2k}{a}\binom{-l}{b}\phi_2( s^{a-k+1} \partial_s,   (s-1)^{b+2l+1} \partial_s) \\
%&=-\sum_{a=0}^{k-2}  \sum_{b=0}^{-l+1}(-1)^{a+b}\binom{2k}{a}  \binom{-l+1}{b}\binom{-b-2l+k-a-1}{-b-2l+1}((-b-2l)^3-(-b-2l))\\ 
%&\quad +\sum_{a=k-1}^{2k}  \sum_{b=0}^{-l+1}(-1)^{a}\binom{2k}{a}  \binom{-l+1}{b}\binom{a-k+1}{-b-2l+1}((-b-2l)^3-(-b-2l))\\ 
%&\quad -\sum_{a=0}^{k-2}  \sum_{b=0}^{-l}(-1)^{a+b}\binom{2k}{a}  \binom{-l}{b}\binom{-b-2l+k-a-1}{-b-2l+1}((-b-2l)^3-(-b-2l))\\ 
%&\quad +\sum_{a=k-1}^{2k}  \sum_{b=0}^{-l}(-1)^{a}\binom{2k}{a}  \binom{-l}{b}\binom{a-k+1}{-b-2l+1}((-b-2l)^3-(-b-2l))\\ 
%&=\sum_{a=0}^{k-2}  \sum_{b=0}^{-l+1}(-1)^{a}\binom{2k}{a}  \binom{-l+1}{b}\binom{a-k+1}{-b-2l+1}((-b-2l)^3-(-b-2l))\\ 
% &\quad +\sum_{a=k-1}^{2k}  \sum_{b=0}^{-l+1}(-1)^{a}\binom{2k}{a}  \binom{-l+1}{b}\binom{a-k+1}{-b-2l+1}((-b-2l)^3-(-b-2l))\\ 
% &\quad +\sum_{a=0}^{k-2}  \sum_{b=0}^{-l}(-1)^{a}\binom{2k}{a}  \binom{-l}{b}\binom{a-k+1}{-b-2l+1}((-b-2l)^3-(-b-2l))\\ 
% &\quad +\sum_{a=k-1}^{2k}  \sum_{b=0}^{-l}(-1)^{a}\binom{2k}{a}  \binom{-l}{b}\binom{a-k+1}{-b-2l+1}((-b-2l)^3-(-b-2l))\\ 
%&=\sum_{a=0}^{2k}  \sum_{b=0}^{-l+1}(-1)^{a}\binom{2k}{a}  \binom{-l+1}{b}\binom{a-k+1}{-b-2l+1}((-b-2l)^3-(-b-2l))\\ 
% &\quad +\sum_{a=0}^{2k}  \sum_{b=0}^{-l}(-1)^{a}\binom{2k}{a}  \binom{-l}{b}\binom{a-k+1}{-b-2l+1}((-b-2l)^3-(-b-2l))\\ 
&=-l(l+1)(l+2)\delta_{k+l,-1}-2l(l+1)(2l+1)\delta_{k+l,0}\\
&\quad+l(l+1)(l+2)\delta_{k+l,-1}+2l(l+1)(2l+1)\delta_{k+l,0}\\
&=0,
\end{align*}
and the last one is obtained by simplifying 
\begin{align*}
\bar\phi_2(t^kuD,t^luD)%&=\sum_{a=0}^{2k+1}\sum_{b=0}^{-l+1}(-1)^{a}\binom{2k+1}{a}\binom{-l+1}{b}\phi_2(s^{k+2-a}\partial_s,(s-1)^{b+2l+1}\partial_s)\\
%&\quad+\sum_{a=0}^{2k+1}\sum_{b=0}^{-l+1}(-1)^a\binom{2k+1}{a}\binom{-l+1}{b}\phi_2(s^{k+1-a}\partial_s,(s-1)^{b+2l+1}\partial_s)\\
%&\quad+\sum_{a=0}^{2k+1}\sum_{b=0}^{-l}(-1)^a\binom{2k+1}{a}\binom{-l}{b}\phi_2(s^{k+2-a}\partial_s,(s-1)^{b+2l+1}\partial_s)\\
%&\quad+\sum_{a=0}^{2k+1}\sum_{b=0}^{-l}(-1)^a\binom{2k+1}{a}\binom{-l}{b}\phi_2(s^{k+1-a}\partial_s,(s-1)^{b+2l+1}\partial_s)\\
&=\sum_{a=0}^{k+1}\sum_{b=0}^{-l+1}(-1)^a\binom{2k+1}{a}\binom{-l+1}{b}\binom{k+2-a}{-b-2l+1}((-b-2l)^3-(-b-2l))\\
&\quad+\sum_{a=0}^{k}\sum_{b=0}^{-l+1}(-1)^a\binom{2k+1}{a}\binom{-l+1}{b}\binom{k+1-a}{-b-2l+1}((-b-2l)^3-(-b-2l))\\
&\quad+\sum_{a=0}^{k+1}\sum_{b=0}^{-l}(-1)^a\binom{2k+1}{a}\binom{-l}{b}\binom{k+2-a}{-b-2l+1}((-b-2l)^3-(-b-2l))\\
&\quad+\sum_{a=0}^{k}\sum_{b=0}^{-l}(-1)^a\binom{2k+1}{a}\binom{-l}{b}\binom{k+1-a}{-b-2l+1}((-b-2l)^3-(-b-2l))\\
&\quad+\sum_{a=k+2}^{2k+1}\sum_{b=0}^{-l+1}(-1)^a\binom{2k+1}{a}\binom{-l+1}{b}\binom{k+2-a}{-b-2l+1}((-b-2l)^3-(-b-2l))\\
&\quad+\sum_{a=k+1}^{2k+1}\sum_{b=0}^{-l+1}(-1)^a\binom{2k+1}{a}\binom{-l+1}{b}\binom{k+1-a}{-b-2l+1}((-b-2l)^3-(-b-2l))\\
&\quad+\sum_{a=k+2}^{2k+1}\sum_{b=0}^{-l}(-1)^a\binom{2k+1}{a}\binom{-l}{b}\binom{k+2-a}{-b-2l+1}((-b-2l)^3-(-b-2l))\\
&\quad+\sum_{a=k+1}^{2k+1}\sum_{b=0}^{-l}(-1)^a\binom{2k+1}{a}\binom{-l}{b}\binom{k+1-a}{-b-2l+1}((-b-2l)^3-(-b-2l))
\end{align*}
where, for example,
%\begin{align*}
%\sum_{a=0}^{k+1}\sum_{b=0}^{-l+1}(-1)^a\binom{2k+1}{a}\binom{-l+1}{b}\binom{k+2-a}{-b-2l+1}
%&=\sum_{a=0}^{k+1}(-1)^a\binom{2k+1}{a}\binom{k-l+3-a}{-2l+1}\\
%&=\sum_{a=0}^{k+1}(-1)^a\binom{2k+1}{a}\binom{k-l+3-a}{k+l+2-a}\\
%&=(-1)^{l+k}\sum_{a=0}^{k+1}\binom{2k+1}{a}\binom{2l-2}{k+l+2-a}\\
%&=(-1)^{l+k}\binom{2l+2k-1}{l+k+2}
%\end{align*}
%and
\begin{align*}
\sum_{a=0}^{k}\sum_{b=0}^{-l+1}(-1)^{a}\binom{2k+1}{a}\binom{-l+1}{b}\binom{k+1-a}{-b-2l+1}
%&=\sum_{a=0}^{k}(-1)^{a}\binom{2k+1}{a}\binom{-l+k+2-a}{-2l+1}\\
%&=\sum_{a=0}^{k}(-1)^{a}\binom{2k+1}{a}\binom{-l+k+2-a}{-a+l+k+1}\\
%&=\sum_{a=0}^{k}(-1)^{l+k+1}\binom{2k+1}{a}\binom{2l-2}{-a+l+k+1}\\
&=(-1)^{l+k+1}\binom{2k+2l-1}{l+k+1}.\\
\end{align*}
\end{proof}
 
 It is interesting to note that even though the calculations here are more complicated and involve more variables, the spirit of the calculation is reminiscent of one of the proofs of binomial inversion. 

\section{Acknowledgement}
The authors are grateful to professor Ben Cox for many interesting and fruitful discussions. The second author would like to thank the College of Charleston for hospitality during his visit in 2014. The second author was supported by FAPESP grant (2012/02459-8).

\def\cprime{$'$} \def\cprime{$'$} \def\cprime{$'$}

 \end{document}